\documentclass[12pt,a4paper]{article}

\usepackage{graphicx,enumitem}
\usepackage{amsmath,amsthm,amssymb,color}
\usepackage[noadjust,sort]{cite}
\usepackage{hyperref}

\usepackage[font=small,labelfont=bf]{caption}
\normalsize

\newtheorem{thm}{Theorem}[section]

\newtheorem{lemma}[thm]{Lemma}

\numberwithin{equation}{section}

\newcommand{\st}{\mathrel{:}}

\newcommand{\abs}[1]{\lvert#1\rvert} 
\newcommand{\Abs}[1]{\bigl\lvert#1\bigr\rvert} 
\newcommand{\Neq}[1]{N_{#1}}
\newcommand{\Nge}[1]{N_{\ge#1}}

\newcommand{\Req}[1]{\mathcal{R}_{#1}}
\newcommand{\Rge}[1]{\mathcal{R}_{\ge#1}}
\newcommand{\RG}{\mathcal{H}}
\newcommand{\avec}{\boldsymbol{a}}
\newcommand{\Xvec}{\boldsymbol{X}}
\newcommand{\uvec}{\boldsymbol{u}}
\newcommand{\vvec}{\boldsymbol{v}}
\newcommand{\xvec}{\boldsymbol{x}}
\newcommand{\Aut}{\operatorname{Aut}}
\newcommand{\zero}{\mathbf{0}}
\newcommand{\one}{\mathbf{1}}

\DeclareMathOperator{\sech}{sech}

\newcommand{\twonorm}[1]{\mathopen{\|}#1\mathclose{\|}_2}

\def\trans{^{\mathsf{T}}}
\def\Trans#1{\trans\mkern-#1mu}
\renewcommand{\dfrac}[2]{\lower0.12ex\hbox{\large$\textstyle\frac{#1}{#2}$}}
\newcommand{\Dfrac}[2]{\raise0.05ex\hbox{\small$\displaystyle\frac{#1}{#2}$}}
\newcommand{\eps}{\varepsilon}

\newcommand{\Reals}{\mathbb{R}}
\newcommand{\E}{\operatorname{\mathbb{E}}}
\newcommand{\Var}{\operatorname{\mathbb{V}ar}}
\renewcommand{\Pr}{\operatorname{\mathbb{P}}}
\newcommand{\seqnum}[1]{\href{https://oeis.org/#1}{\rm \underline{#1}}}

\def\nicebreak{\vskip 0pt plus 50pt\penalty-300\vskip 0pt plus -50pt }

\begin{document}

\title{Ramsey numbers for regular induced subgraphs}
\author{
Paul W. Dyson \\
\small Independent researcher \\[-0.5ex]
\small Sydney, Australia \\[-0.5ex]
\small\tt pwdyson@yahoo.com
\and
Brendan D. McKay\thanks{Supported by Australian Research Council grant DP190100977} \\
\small School of Computing \\[-0.5ex]
\small Australian National University  \\[-0.5ex]
\small Canberra ACT 2601, Australia \\[-0.5ex]
\small\tt brendan.mckay@anu.edu.au
}

\date{\today}

\maketitle

\begin{abstract}
A problem proposed by Erd\H{o}s, Fajtlowicz and Staton asks for the
smallest $n$ for which every graph on $n$ vertices contains a regular induced
subgraph of order at least~$k$.
A variation is to ask for a regular induced subgraph of order exactly~$k$.
In this paper we provide exact values for $k\le 5$ and lower bounds for $k=6$ and $k=7$.
We also improve the general lower bound of Alon,
Krivelevich and Sudakov [SIAM J. Disc.\ Math., 2008].
\end{abstract}

\noindent Keywords: regular induced subgraph,
Ramsey number, Erd\H{o}s problem

\noindent AMS Subject classifications:
 05D10; 05C55, 05C35

\section{Introduction}
All the graphs in this paper are undirected and simple.
For positive integers $k,n$, let $\Req k(n)$
denote the set of graphs with $n$ vertices and no induced regular subgraph of order~$k$.
Similarly, let $\Rge k(n)$ denote the set of graphs with $n$ vertices and no
induced regular subgraph of order
\textit{at least}~$k$.
Ramsey's theorem implies that for fixed~$k$
both sets are empty for sufficiently large~$n$.
Also, membership of each set is inherited by
induced subgraphs.
Thus we can define two Ramsey-like parameters:
\begin{align*}
 \Neq k &= \min \bigl\{ n\ge 1 \st 
  \Req k(n)=\emptyset\bigr\}, \\
 \Nge k &= \min \bigl\{ n\ge 1 \st
  \Rge k(n)=\emptyset\bigr\}.
\end{align*}

The literature on these functions is very sparse. Erd\H{o}s, Fajtlowicz and Staton~\cite{E92,E93,E95}
defined inverse functions
\begin{align*}
    f(n) &= \max\bigl\{ k \st n \ge \Nge k\bigr\} , \\
    t(n) &= \max\bigl\{ k \st n \ge R(k,k)\bigr\} ,
\end{align*}
where $R(k,k)$ is the diagonal Ramsey number, and asked two questions:\\
Q1.  Does $f(n)/\log n \to\infty$? \\
Q2.  Does $f(n)-t(n) \to\infty$? \\
An affirmative answer to Q1 would give one for Q2, but both these questions
remain open.

Fajtlowicz et al.\ noted that $\Neq 1=\Nge 1=1$,
$\Neq 2=\Nge 2=2$,
$\Neq 3=6$, $\Nge 3=5$,
$\Neq 4=8$ and $\Nge 4=7$~\cite{FMRS}.
They also gave bounds $\Neq 5\ge 19$, $\Nge 5\ge 12$ and $\Neq 6\ge 18$.

It is clear that in general $\Nge k\le\Neq k\le R(k,k)$, and also that
$\Nge k\le \Nge{k+1}$.
Otherwise very little seems to be known, including the answers to 
these questions:\\
Q3. Is $\Neq k\le \Neq{k+1}$ for $k\ge 1$?\\
Q4. Does $\Neq k-\Nge k\to\infty$ as $k\to\infty$?

The lower bound $\Nge k\ge k^{2-o(1)}$ was proved by 
Bollob\'as~\cite{B} and this was improved by Alon,
Krivelevich and Sudakov~\cite{AKS} to
$\Nge k=\Omega(k^2/(\log k)^{3/2})$.
Both these results were non-constructive and proved using
a heterogeneous random graph.
Note that this is quite different from the case
of a homogeneous random graph of order $n$,
which generally has induced regular subgraphs
of order~$\Theta(n^{2/3})$~\cite{KSW}.
We are not aware of any constructive near-quadratic lower
bounds for $\Nge k$.

For $\Neq k$, Fajtlowicz et al.\ noted that the disjoint union
of $p-1$ cliques of order $p-1$ has no induced regular subgraph of order~$p$, provided $p$ is prime.
In Section~\ref{s:explicit} we will strengthen and
generalise this observation with explicit
constructions, though we do not achieve a quadratic
lower bound for all~$k$.

In this paper, we remove the logarithmic factor from the general bound
of Alon, Krivelevich and Sudakov.
We also
extend knowledge of $\Neq k$ and $\Nge k$
for small~$k$.

\begin{thm}\label{t:improved}
  As $k\to\infty$,
  $\Nge k \ge ((2e)^{-1}-o(1))k^2$.
\end{thm}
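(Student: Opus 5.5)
The plan is to use a first-moment argument over a suitable \emph{heterogeneous} random graph, in the spirit of Bollob\'as~\cite{B} and Alon, Krivelevich and Sudakov~\cite{AKS}, but with a sharper estimate of the probability that a fixed vertex set induces a regular graph. I expect that sharper estimate to be what removes the logarithmic factor. Fix $\eps>0$ and put $n=\lfloor((2e)^{-1}-\eps)k^2\rfloor$. Give the vertices $1,\dots,n$ ``weights'' $x_i=i/n$, and join $i,j$ independently with a probability $p_{ij}=\phi(x_i,x_j)$ for a fixed smooth function $\phi$ with values in a compact subinterval of $(0,1)$. The function $\phi$ is chosen so that the expected degree of $v$ inside a set $S$ is strictly monotone in $x_v$ with slope of order $\abs S$. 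A natural first choice is $\phi(x,y)=\tfrac14+\tfrac12\cdot\tfrac{x+y}2$. For each $m\ge k$ it then suffices to show that the expected number of $m$-sets $S$ with $G[S]$ regular is $o(1/n)$, uniformly in $m$. Summing over $k\le m\le n$ then shows that $G\in\Rge k(n)$ with positive probability.

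The key step is an upper bound on $\Pr(G[S]\text{ is regular})$ for a fixed $S=\{v_1<\dots<v_m\}$. I would expose the vertices of $S$ in increasing order of weight. I would then bound, conditionally on everything exposed so far, the probability that $v_t$ has the same degree $d$ as the others. The bound should be a local limit estimate of the form $\Pr(\text{Bin-sum}=d)\le \frac{C}{\sqrt{m}}\exp\bigl(-(\mu_t-d)^2/(2\sigma_t^2)\bigr)$, where $\mu_t$ is the conditional mean degree and $\sigma_t^2=\Theta(m)$. Because $\phi$ is increasing, the means $\mu_t$ of the vertices of $S$ differ by roughly $m(x_{v_t}-x_{v_1})/2$. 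The Gaussian penalties therefore force the weights of $S$ to lie in a window of width $O(m^{-1/2})$, or else the product of the penalties is very small. Summing over $d$ and over the positions of $S$, the count of admissible $S$ should behave like $\binom{n}{m}$ weighted by $\prod_t e^{-c\,m(x_{v_t}-\bar x)^2}$. I would evaluate this by treating the gaps between consecutive weights as free variables and integrating the Gaussian, which should give a bound like $(C'n/m^{2})^{m}\cdot m^{O(1)}$ for the expected number of regular $m$-sets. The constant $(2e)^{-1}$ should emerge from optimising this expression: Stirling's $\binom nm\approx (en/m)^m$ contributes the $e$, and the variance $\tfrac14$-type constant of the Bernoulli sums, together with the Gaussian integral, contributes the $2$. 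I expect to tune $\phi$, possibly toward a degenerate limit, so that the base of the exponential is exactly $2en/m^2$. Then $n<((2e)^{-1}-\eps)m^2$ makes the expectation decay exponentially in $m$.

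The main obstacle is making the conditional local limit bound valid and uniform. Earlier exposures fix edges, so $\mu_t$ depends on the random graph so far, and the naive $\exp(-(\mu_t-d)^2/2\sigma^2)$ penalty cannot simply be multiplied. The first thing I would try is to apply the Gaussian penalty only to the $\lfloor m/2\rfloor$ heaviest vertices. I would expose their edges to the lighter half only after fixing the lighter half's internal structure, so that each heavy vertex's degree into the light half is a fresh sum of independent Bernoullis with deterministic mean. For the light half I would use only the crude $C/\sqrt m$ factor. If this loses a constant in the exponent, I would instead bound the joint degree-sequence probability directly via a multivariate local limit theorem for the degree vector of an inhomogeneous random graph. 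The quadratic form of that theorem gives the Gaussian penalty $\exp(-\Theta(1)\cdot m\sum_t(x_{v_t}-\bar x)^2)$ exactly. The remaining steps are routine estimates: the sum over $m$, large $m$ comparable to $n$, and the rounding in $n$.
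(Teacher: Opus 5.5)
There is a genuine gap at the step you yourself flag as central. To reach $(2e)^{-1}$, the bound on $\Pr(G[S]\text{ is regular})$ must be correct up to a factor $e^{o(m)}$. A first-moment count done to that accuracy gives exactly $(2e)^{-1}$ in the limit, so any constant-factor loss per vertex changes the constant. Your exposure scheme loses such a factor. When $v_t$ is processed, only its $m-t$ edges to later vertices are fresh, so $\sigma_t^2=\Theta(m-t)$, not $\Theta(m)$. Even ignoring the Gaussian penalties, the product of the local factors is then about $\prod_t(2\pi p(1-p)(m-t))^{-1/2}\approx(2\pi p(1-p)m/e)^{-m/2}$. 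By Lemmas~\ref{regularcount} and~\ref{l:best}, the true probability is $\Theta(1)q_\lambda^m\approx(2\pi p(1-p)m)^{-m/2}$. You therefore lose $e^{m/2}$, which caps you at $(2e^{3/2})^{-1}$ even if everything else were sharp. This loss is structural, not an artefact of the ordering: if each edge counts as fresh randomness for only one endpoint, the fresh-trial counts sum to $\binom m2$, and AM--GM forces a loss of at least $2^{m/2}$.

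The heavy/light split is worse still. The heavy vertices' penalties see the light weights only through $\sum_{u\in L}\phi(x_v,x_u)$; for your linear $\phi$ this is only through $\sum_{u\in L}x_u$. The light half receives only the crude $C/\sqrt m$. So nothing confines the light weights to a window of width $O(m^{-1/2})$: they range over roughly $\binom{n}{m/2}$ positions, and the resulting count is small only for $n$ up to about $m^{7/4}$. That is not even a quadratic bound.

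The multivariate local-limit fallback is not routine; it is the whole difficulty. You would need a point-probability estimate for an $m$-dimensional lattice vector with relative accuracy $e^{o(m)}$, uniformly over configurations. That is essentially a saddle-point enumeration of graphs with given degrees in an inhomogeneous model. You would also need the exact coefficient in the penalty, namely $-\frac{\phi_1^2}{2p(1-p)}\,m\sum_t(x_{v_t}-\bar x)^2$ with $\phi_1=\partial_x\phi(x,y)|_{y=x}$, not a $\Theta(1)$. Done sharply, the $p(1-p)$ factors of the local term and of the Gaussian integral cancel, and the base is $enf/(\phi_1m^2)$, where $f$ is the weight density. The $2$ therefore comes from $\int 2\phi_1\,dx\le 1$ (the diagonal $\phi(x,x)$ must stay in $[0,1]$), not from a variance $\tfrac14$. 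Your $\phi=\tfrac14+\tfrac{x+y}{4}$ caps you at $(4e)^{-1}$.

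The paper sidesteps the local-limit problem entirely. With $p_{ij}=\sigma(X_i+X_j)$, the probability $\Pr(G[k]=H\mid\Xvec)$ is identical for every $d$-regular $H$. Hence the probability that $G[k]$ is $d$-regular given $\Xvec$ is exactly $\abs{\RG(k,d)}$ times a single likelihood, and the sharp enumeration of Lemma~\ref{regularcount} is imported as a black box. What remains is a Laplace-method integral against a log-concave density. There the logistic density gives $f(z)/\sigma'(2z)=2/\tanh T$ for every $\lambda$, and $T\to\infty$ is your ``degenerate limit''. The idea your proposal is missing is this exponential-family choice of edge model, which lets you multiply one likelihood by the known number of regular graphs instead of deriving a local limit theorem by exposure.
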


\begin{thm}\label{t:comp}
We have $\Neq 5=21$ and $\Nge 5=17$.
Moreover, $\Neq 6\ge 28$, $\Nge 6\ge 21$, $\Neq 7\ge 71$ and $\Nge 7\ge 30$.
\end{thm}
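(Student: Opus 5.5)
The statement has two parts. The exact values need a lower bound (a witness graph) and a matching upper bound (nonexistence). The remaining bounds $\Neq 6\ge 28$, $\Nge 6\ge 21$, $\Neq 7\ge 71$, $\Nge 7\ge 30$ are lower bounds only, so each needs just one explicit witness. Concretely, we must exhibit graphs in $\Req 5(20)$, $\Rge 5(16)$, $\Req 6(27)$, $\Rge 6(20)$, $\Req 7(70)$ and $\Rge 7(29)$, and prove that $\Req 5(21)=\emptyset$ and $\Rge 5(17)=\emptyset$.

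\emph{Finding the witnesses.} For the small cases ($n\le 29$) I would use computer search. Start from graphs with a large automorphism group, such as circulants, Cayley graphs and vertex-transitive graphs, and then run simulated annealing or local search. The objective function is the number of induced regular $k$-subsets (or $\ge k$-subsets). Each witness is then certified by an exhaustive check of all $\binom{n}{k}$ subsets (respectively all subsets of size at least $k$), which is trivial for $n\le 29$.

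For $\Neq 7\ge 71$ the graph is too large for naive search. I would instead generalise the Fajtlowicz et al.\ construction: take a disjoint union of cliques, or a blow-up whose arithmetic forbids regularity on $7$ vertices. A $7$-vertex induced subgraph meeting cliques of sizes $a_1,\dots,a_r$ (with $\sum a_i=7$) is regular only if all $a_i$ are equal. Since $7$ is prime, this forces either a single clique part of size $7$ or seven singletons, i.e.\ an independent set. So six copies of $K_6$ already works; refining this with a structured complement pattern between cliques should reach $70$ vertices. The Section~\ref{s:explicit} constructions would be cited here. Verification for $n=70$ is feasible by symmetry reduction: use the automorphism group to enumerate only orbit representatives of $7$-sets.

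\emph{Proving the upper bounds.} These are the main obstacle. I would compute $\Rge 5(n)$ and $\Req 5(n)$ exhaustively by vertex extension, exploiting heredity. Every graph in $\Req 5(n+1)$ has all its vertex-deleted subgraphs in $\Req 5(n)$. So starting from $n=1$, I generate all one-vertex extensions of each isomorphism class, discard those containing a forbidden regular induced $5$-subgraph, and reduce by isomorphism with canonical labelling (nauty), in the style of McKay's Ramsey-number computations.

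Two prunings keep this tractable:
\begin{itemize}
\item \textbf{Complementation symmetry.} Regularity is preserved under complement, so $\Req k(n)$ and $\Rge k(n)$ are closed under complement and only one of each complementary pair needs to be stored.
\item \textbf{Local testing.} When testing a new vertex $v$, only $5$-subsets containing $v$ need checking.
\end{itemize}

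The process must show that the classes become empty at $n=17$ (for $\ge 5$) and at $n=21$ (for $=5$). The class sizes near the peak may be large, perhaps millions, so the expected difficulty is computational: controlling the intermediate counts and cross-checking correctness. For cross-checking I would run two independent programs and also confirm agreement with the known values $\Neq 4=8$ and $\Nge 4=7$.
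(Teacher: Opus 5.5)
\textbf{Gap: no witness for $\Neq 7\ge 71$.} This bound needs an explicit graph in $\Req 7(70)$, and your suggested route does not produce one. The graph $6K_6$ has only $36$ vertices. The clique blow-ups of Section~\ref{s:explicit} grow like $\frac98(p-1)^2$: for $p=7$, Theorem~\ref{thm-p}(c) gives $42$ vertices, and $3C_5[K_3]$ gives $45$, the best such construction the paper knows. Computer search in an earlier version of the paper also stalled at $47$ vertices. Your sentence ``refining this with a structured complement pattern between cliques should reach $70$ vertices'' has no construction behind it.

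The paper's witness comes from a completely different source. Take the complement $S$ of the strongly regular graph $\mathit{VO}^-_6(2)$ with parameters $(64,27,10,12)$, and choose non-adjacent vertices $v,w$ of $S$. Append new vertices $a,b,c,d,e,f$ with edges $va,ab,bc,wd,de,ef$. Join $c$ to all neighbours of $v$ in $S$, and $f$ to all neighbours of $w$ in $S$. The resulting $70$-vertex graph is then checked to have no regular induced subgraph of order $7$. Without this or some other explicit graph, that part of the theorem is not established. For the smaller lower bounds your search plan is plausible; for instance, all $16$ known graphs in $\Req 6(27)$ contain the vertex-transitive graph $C_5[C_5]$. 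A proof still needs the actual graphs, which the paper supplies.

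\textbf{Feasibility of the exact values.} Your method here is the paper's, but your size estimate is off by about seven orders of magnitude. $\Req 5(n)$ has about $4.2\times10^{13}$ members in total, with $2.9\times10^{13}$ at $n=16$. Only $\Rge 5$, with about $1.6\times10^8$ members in total, is near the range you guessed. At that scale, any scheme that stores classes and deduplicates them is infeasible, including storing one graph from each complementary pair. Three things make the paper's computation work:
\begin{itemize}
\item canonical augmentation (McKay's canonical construction path), which accepts exactly one member of each isomorphism class without storing or comparing graphs;
\item the incremental bound $\mathcal{U}_G\subseteq\mathcal{U}_{G-n}\cup\{U\cup\{n\}\st U\in\mathcal{U}_{G-n}\}$ on admissible neighbourhoods of the new vertex, so that only regular subgraphs containing both $n$ and $n{+}1$ need testing;
\item the restriction that the last vertex added has maximum degree.
\end{itemize}
Even with these, it was a large distributed computation.
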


A weaker form of Theorem~\ref{t:improved} is proved in
Section~\ref{s:general}, while Theorem~\ref{t:improved}
is proved using a more complex model in
Section~\ref{s:improved}.
Section~\ref{s:explicit} will give explicit
examples of graphs in $\Req k$ for some
special values of~$k$.
Then in Section~\ref{s:comp} we will describe
our computations and give tables
of counts in $\Req k(n)$ and $\Rge k(n)$.

\section{A weaker quadratic bound}\label{s:general}

Since our proof of Theorem~\ref{t:improved} is quite
complicated, in this section we will show that a much
simpler argument gives a quadratic lower bound,
albeit with a smaller constant.

\begin{thm}\label{t:general}
For sufficiently large $k$,
$\Nge k\ge \frac{9}{163}k^2$.
\end{thm}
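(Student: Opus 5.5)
We would use a heterogeneous random graph, in the spirit of Bollob\'as~\cite{B} and Alon, Krivelevich and Sudakov~\cite{AKS}, but with a simpler edge-probability profile. Take $n=ck^2$ with $c<9/163$ to be optimised, vertex set $[n]$, and let edges appear independently with probability depending linearly on the labels, for instance $p_{ij}=\tfrac14+\tfrac{i+j}{4n}$, so that all probabilities lie in $[\frac14,\frac34]$. It suffices to show that the expected number of pairs $(S,d)$ with $|S|=m\ge k$ and $G[S]$ $d$-regular tends to $0$, summing over $m\ge k$. The number of choices of $d$ is at most $m$, which is negligible.

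\textbf{Step 1 (spread sets).} Fix $S$ with $|S|=m$. List $S$ in increasing label order and let $A$ be the lower half and $B$ the upper half. If $G[S]$ is regular, then summing degrees over $A$ and over $B$ gives $2e(A)+e(A,B)=2e(B)+e(A,B)$, so $e(A)=e(B)$. The counts $e(A)$ and $e(B)$ are independent sums of Bernoulli variables with variances $\Theta(m^2)$. If the labels of $S$ span an interval of length $L$, a median-split argument shows that their means differ by $\Omega(m^2 L'/n)$, where $L'$ is comparable to $L$ after discarding a few extreme vertices. A Chernoff/Hoeffding bound then gives
\[
\Pr\bigl(e(A)=e(B)\bigr)\le\exp\bigl(-\Omega(m^2L'^2/n^2)\bigr).
\]
The number of sets $S$ with span $L$ is at most $n\binom{L}{m}\le n(eL/m)^m$. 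For $L\ge \alpha n$ the exponent $m^2\alpha^2/n^2\cdot m^2$ is of order $m^2$, which dominates $m\log(ek)$. Hence spread sets contribute $o(1)$.

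\textbf{Step 2 (concentrated sets).} When $L\le\alpha n$, all probabilities on $S$ lie in a window of width $O(\alpha)$ around some $p$, so $G[S]$ is close to $G(m,p)$. Here we bound $\Pr(G[S]\text{ is }d\text{-regular})$ directly. By the asymptotic enumeration of regular graphs (or the cruder bound that the degree sequence must be constant), this probability is at most $\bigl(O(1/\sqrt{m})\bigr)^{m(1-o(1))}$ times a correction $e^{O(\alpha^2 m^2)}$ that accounts for the varying $p_{ij}$. It must be compared with the number of such sets, at most $n\binom{\alpha n}{m}\le n(e\alpha ck)^m$. With $m\ge k$ we need roughly
\[
e\alpha c\,k\cdot k^{-1/2}\cdot(\text{correction})<1 .
\]
This is where the argument is weakest: the factor $1/\sqrt m$ alone only kills $k^{1/2}$, not $k$. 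We would instead use the sharper regular-graph enumeration, under which the probability that $G(m,p)$ is $d$-regular is about $\bigl(\sqrt{2\pi p(1-p)m}\bigr)^{-m}e^{O(1)}$. In addition, the linear profile forces intrinsic degree discrepancies even inside a short window. So we would choose $\alpha$ depending on $m$, namely $\alpha\asymp \sqrt{\log k/m}$-type scales, and interpolate the Step~1 and Step~2 estimates over all $L$.

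\textbf{Main obstacle.} We expect the main obstacle to be the intermediate range of spans, where neither the mean-discrepancy bound nor the local regularity bound is strong enough on its own. There we would condition on $e(A)=e(B)$ and refine $S$ into more pieces: splitting into $t$ label-blocks gives $t-1$ independent linear constraints on block edge counts, each costing a factor $\exp(-\Omega((\text{block mean gap})^2/\text{var}))$. Optimising the linear profile, the number of blocks and the threshold between regimes is what should yield an explicit constant such as $9/163$. Finally, we sum over $m$ from $k$ to $n$ and check that the total expectation is $o(1)$, so that some graph on $\frac{9}{163}k^2$ vertices lies in $\Rge k$.
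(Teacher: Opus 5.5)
Your proposal has a genuine gap, and it goes beyond the unfinished intermediate range. The mechanism that makes a quadratic bound possible is missing, and in Step~2 it enters with the wrong sign.

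In your model $p_{ij}=\frac12(a_i+a_j)$ with $a_i=\frac14+\frac{i}{2n}$. For a $k$-set $S$, let $\bar a$ be the mean of the $a_i$, $y_i=a_i-\bar a$, and $s^2=\sum_{i\in S}y_i^2$. Fix any $d$-regular $H$ on $S$ and expand $\log\Pr(G[S]=H)$ about $\bar a$. The first-order term is $\bigl(\frac{d}{2\bar a}-\frac{k-1-d}{2(1-\bar a)}\bigr)\sum_i y_i=0$, identically, because $H$ is regular. The second-order term is negative. Lemma~\ref{uvbound} turns this into
\[
\Pr(G[S]=H)\le \bar a^{kd/2}(1-\bar a)^{\binom k2-kd/2}\exp\Bigl(-\frac{c_2}{4}(k-2)s^2\Bigr),
\]
uniformly in $H$. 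So by Lemma~\ref{regularcount}, for $d/(k-1)$ in the middle range, $\Pr(G[S]\text{ is $d$-regular})=O(1)(C/\sqrt k)^k e^{-\beta s^2}$ with $\beta=\frac{c_2}{4}(k-2)$. The enumeration supplies one factor $k^{-k/2}$. The Gaussian weight $e^{-\beta s^2}$, summed over $k$-sets as in Lemma~\ref{cube}, supplies the second $k^{-k/2}$ needed against $\binom nk\approx(en/k)^k$ when $n\asymp k^2$.

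In your plan these two factors never appear together:
\begin{itemize}
\item Step~1 keeps only the heterogeneity penalty and discards $(C/\sqrt m)^m$, so it works only for spans $\gg n\sqrt{\log k/m}$.
\item Step~2 keeps $(C/\sqrt m)^m$ but charges heterogeneity as a loss $e^{O(\alpha^2m^2)}$. That is harmless only for spans $O(n/\sqrt m)$, and it is $k^{O(m)}$ at $\alpha\asymp\sqrt{\log k/m}$.
\end{itemize}
The window in between stays open. Closing it regime by regime costs a power of $\log k$, which is the kind of loss in the Bollob\'as and Alon--Krivelevich--Sudakov bounds. With the correct sign, spans $\theta n/\sqrt m$ contribute roughly $(Cc\,\theta e^{-c'\theta^2})^m$, which is small uniformly in $\theta$. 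Then neither Step~1 nor the block splitting is needed.

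Three further points:
\begin{itemize}
\item The span, even after trimming outliers, is the wrong statistic. The gain is governed by $s^2$, so sets should be weighted by $e^{-\beta s^2}$ rather than counted by span. This also removes the outlier issue in Step~1.
\item Your deterministic profile is not the problem. For each $S$, $e^{-\beta s^2}\le e^{O(1)}e^{-\beta\sum_{i\in S}(a_i-t)^2}$ for some $t$ in a grid of $O(k)$ points. The sum over $k$-subsets then factorises and reproduces the bound of Lemma~\ref{cube} up to factors polynomial in $k$. So the paper's random-weight argument transfers essentially verbatim to your labelled model.
\item The constant $\frac9{163}$ is never derived in your sketch. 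In the paper it comes from requiring $\frac{2ec(1-\alpha_0)}{(1-2\alpha_0)\sqrt{(\alpha_0-\varepsilon)(1-\alpha_0+\varepsilon)}}<1$ with $c=\frac9{163}$, where $\alpha_0=0.191$ is the weight-truncation parameter and $\varepsilon=10^{-4}$. Nothing in the proposal would produce this or any other explicit constant.
\end{itemize}
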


As mentioned in the introduction, Alon,
Krivelevich and Sudakov proved the
lower bound $\Nge k=\Omega(k^2/\log^{3/2} k)$~\cite{AKS}.
In this section we will improve their bound
to $\Nge k\ge \frac{9}{163}k^2$.

Define $\RG(k,d)$ to be the set of regular graphs of order
$k$ and degree~$d$.
Define $\lambda=d/(k-1)$ and $K=\binom k2$.

\begin{lemma}\label{regularcount}
  Let $d=d(k)$ be an integer function such that 
  $0\le d\le k-1$ and $dk$ is even for all~$k$.
  Then, as $k\to\infty$, uniformly over $d$ we have
  \[
    \Abs{\RG(k,d)} = \Theta(1)
    \bigl( \lambda^\lambda (1-\lambda)^{1-\lambda}\bigr)^K
    \binom{k-1}{d}^k.
  \]
\end{lemma}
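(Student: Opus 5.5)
The plan is to derive the lemma from the known asymptotic enumeration of regular graphs, not to count from scratch. The sharpest available form is the theorem of Liebenau and Wormald (building on McKay and Wormald for the sparse range $d=o(k^{1/2})$ and the dense range $\min\{d,k-1-d\}>ck/\log k$). It states that, uniformly over $1\le d\le k-2$ with $dk$ even,
\[
  \Abs{\RG(k,d)} \sim \sqrt2\, e^{1/4}
  \bigl( \lambda^\lambda (1-\lambda)^{1-\lambda}\bigr)^K
  \binom{k-1}{d}^k ,
\]
with $\lambda=d/(k-1)$ and $K=\binom k2$ as defined above. Since $\sqrt2\,e^{1/4}$ is a fixed positive constant and the relative error tends to $0$ uniformly in $d$, the ratio of $\Abs{\RG(k,d)}$ to the displayed main term lies in a fixed compact subinterval of $(0,\infty)$ for all large $k$. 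This is precisely the $\Theta(1)$ claim for $1\le d\le k-2$.

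The first step is to dispose of the boundary cases $d=0$ and $d=k-1$, which the cited theorem excludes.
\begin{itemize}
\item For $d=0$, $\RG(k,0)$ consists of the single empty graph. On the right-hand side, with the convention $0^0=1$, we have $\lambda^\lambda(1-\lambda)^{1-\lambda}=1$ and $\binom{k-1}{0}=1$, so the right-hand side equals $\Theta(1)\cdot 1$.
\item For $d=k-1$, $\RG(k,k-1)=\{K_k\}$ and the right-hand side is again exactly $1$.
\end{itemize}
Both cases are therefore consistent with the lemma, with constant $1$.

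Next, I will note the complementation symmetry $d\leftrightarrow k-1-d$, $\lambda\leftrightarrow 1-\lambda$. It preserves both sides, since $\binom{k-1}{d}=\binom{k-1}{k-1-d}$ and the factor $\lambda^\lambda(1-\lambda)^{1-\lambda}$ is symmetric. This symmetry is not needed if the Liebenau--Wormald statement is cited directly. It is useful if one instead patches together the sparse formula of McKay--Wormald and the dense formula. In the sparse regime the McKay--Wormald formula has the form $\sqrt2\,e^{1/4}(\cdots)\exp(O(d^{-1}))$ times the same main term, which is still $\Theta(1)$ once $d\ge1$.

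Uniformity over an arbitrary function $d(k)$ then follows by a subsequence argument. If the ratio were not bounded above and below, some sequence $(k_i,d_i)$ would make it tend to $0$ or $\infty$. This contradicts the uniform asymptotic for $1\le d_i\le k_i-2$, and the exact computation for $d_i\in\{0,k_i-1\}$.

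The main obstacle is entirely in the cited enumeration: the intermediate range $k^{1/2}\lesssim d\lesssim k/\log k$. That range was open until Liebenau and Wormald. Here it is simply invoked; with only the older results one would obtain the lemma only for $d$ outside that gap.
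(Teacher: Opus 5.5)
Your proposal is correct and is essentially the paper's proof. The paper likewise combines the Liebenau--Wormald theorem with the sparse and dense McKay--Wormald results, so that the implied constant tends to $\sqrt2\,e^{1/4}$ for $1\le d\le k-2$, and treats $d\in\{0,k-1\}$ separately, where $\Abs{\RG(k,d)}=1$. Your side remark that the sparse formula matches the main term up to a factor $\exp(O(d^{-1}))$ misdescribes the error (that ratio is actually $1+o(1)$ throughout $d=o(k^{1/2})$). It plays no role once the uniform result is cited, and it does not affect the $\Theta(1)$ conclusion.
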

\begin{proof}
 This is a combination of theorems for different ranges
 of $d$ proved in \cite{LW,MW,MW2}.
 Except for the extremes $d=0$ and
 $d=k-1$, where $\Abs{\RG(k,d)}=1$,
 the value represented by $\Theta(1)$
 converges to $\sqrt2 e^{1/4}$.
\end{proof}

Let $\alpha\approx\frac15$ and $\eps\in (0,\alpha)$ be
constants that we will optimise later.
Define
\[
    c_2 = \frac{1}{2(1-\alpha)^2}.
\]

\begin{lemma}\label{uvbound}
  For all $u,v\in[\alpha,1-\alpha]$,
  \[
     \log \Dfrac uv \le \frac{u-v}{v} - c_2 (u-v)^2.
  \]
\end{lemma}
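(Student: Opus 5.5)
The plan is to fix $v\in[\alpha,1-\alpha]$ and treat the difference of the two sides as a function of $u$ alone. Define
\[
  f(u) = \frac{u-v}{v} - c_2(u-v)^2 - \log\Dfrac uv ,\qquad u\in[\alpha,1-\alpha].
\]
The claim is that $f(u)\ge 0$ on this interval. Since $f(v)=0$, it suffices to show that $u=v$ is a global minimum of $f$ on $[\alpha,1-\alpha]$. I will do this by a sign analysis of $f'$.

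Differentiating gives
\[
  f'(u) = \frac1v - 2c_2(u-v) - \frac1u
        = (u-v)\Bigl(\frac{1}{uv} - 2c_2\Bigr).
\]
Here $2c_2 = (1-\alpha)^{-2}$ by the definition of $c_2$. Both $u$ and $v$ lie in $[\alpha,1-\alpha]$, so $uv\le(1-\alpha)^2$. Hence $\frac{1}{uv}\ge (1-\alpha)^{-2}=2c_2$, and the second factor is nonnegative throughout the square $[\alpha,1-\alpha]^2$.

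It follows that $f'(u)$ has the same sign as $u-v$ (or vanishes). So $f$ is nonincreasing on $[\alpha,v]$ and nondecreasing on $[v,1-\alpha]$. Therefore $f(u)\ge f(v)=0$ for all $u$ in the interval, which is the stated inequality.

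There is no serious obstacle here. The only point needing care is noticing that the constant $c_2=\frac{1}{2(1-\alpha)^2}$ is chosen exactly so that $2c_2$ is the reciprocal of the largest possible value of $uv$. This is also where the restriction $u,v\le 1-\alpha$ is used. The lower bound $u,v\ge\alpha>0$ is needed only so that the logarithm and the quotients are defined.
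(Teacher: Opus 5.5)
Your proof is correct and is essentially the paper's argument. The paper likewise does a one-variable sign analysis of a derivative, but differentiates in $v$ rather than $u$, obtaining $f_v=(v-u)(1-2c_2v^2)/v^2$; nonnegativity of its second factor uses $v\le 1-\alpha$, just as your factor $\frac{1}{uv}-2c_2\ge 0$ uses $uv\le(1-\alpha)^2$.
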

\begin{proof}
 Let $f(u,v)=\log (v/u) + (u-v)/v - c_2 (u-v)^2$.
 Then the derivative $f_v(u,v)$ is $(v-u)(1-2c_2v^2)/v^2$, which
 has the same sign as $v-u$. Therefore the minimum occurs
 when $v=u$, in which case $f(u,v)=0$.
\end{proof}

\begin{lemma}\label{cube}
  Let $z_1,\ldots,z_k$ be independent random variables 
  uniform on $[\alpha-\frac12,\frac12-\alpha]$, and let $\bar z$ be
  their mean.
  Then, for any $\beta>0$,
  \[
      \E \biggl(\exp\Bigl(-\beta\sum_{i=1}^k (z_i-\bar z)^2\Bigr)\biggr)
      \le k^{1/2} \biggl(\frac{\pi}{(1-2\alpha)^2\beta}\biggr)^{(k-1)/2}.
  \]
\end{lemma}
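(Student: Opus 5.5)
Let $a=\frac12-\alpha$, so the $z_i$ are uniform on $[-a,a]$ with joint density $(2a)^{-k}=(1-2\alpha)^{-k}$ on the cube $C=[-a,a]^k$. The plan is to write the expectation as an integral over $C$, change variables so that one coordinate carries the mean and the rest carry the deviations, integrate out the mean trivially, and finish the deviation part with a Gaussian integral over the whole hyperplane.

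First, we have
\[
 \E\Bigl(e^{-\beta Q(\zvec)}\Bigr) = (1-2\alpha)^{-k}\int_C e^{-\beta Q(\zvec)}\,d\zvec,
 \qquad Q(\zvec)=\sum_{i=1}^k (z_i-\bar z)^2 .
\]
Next, split $\Reals^k$ orthogonally into the line spanned by $\one$ and its orthogonal complement $W$, which has dimension $k-1$. Writing $\zvec = t\,\one/\sqrt k + \wvec$ with $\wvec\in W$ gives $Q(\zvec)=\twonorm{\wvec}^2$, and the Jacobian of this orthogonal change of coordinates is $1$. Since $\bar z = t/\sqrt k$ and $\abs{\bar z}\le a$ on $C$, every point of $C$ has $\abs t\le a\sqrt k$. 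So $C$ is contained in the slab $\{\abs t\le a\sqrt k\}\times W$. The integrand is nonnegative, so enlarging the domain to this slab only increases the integral. This gives
\[
 \int_C e^{-\beta Q}\,d\zvec \le 2a\sqrt k\int_W e^{-\beta\twonorm{\wvec}^2}\,d\wvec
 = (1-2\alpha)\,k^{1/2}\,(\pi/\beta)^{(k-1)/2}.
\]
Multiplying by $(1-2\alpha)^{-k}$ yields
\[
 \E\Bigl(e^{-\beta Q}\Bigr) \le k^{1/2}\,(1-2\alpha)^{-(k-1)}\,(\pi/\beta)^{(k-1)/2}
 = k^{1/2}\Bigl(\frac{\pi}{(1-2\alpha)^2\beta}\Bigr)^{(k-1)/2},
\]
which is exactly the claimed bound.

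The only step needing care is the bound on the $t$-range together with the unit Jacobian. The factor $k^{1/2}$ comes precisely from the length $2a\sqrt k$ of the allowed interval for $t$ (the projection of $C$ onto the $\one$ direction), divided by the density normalisation. There is nothing else to do: the Gaussian integral over $W$ is exact, so the estimate loses only in replacing $C$ by the larger slab.
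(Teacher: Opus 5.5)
Your proof is correct and matches the paper's argument step for step. The paper uses an orthogonal (Helmert) transformation with $w_k=k^{-1/2}\sum_i z_i$, bounds $\abs{w_k}\le\frac12(1-2\alpha)k^{1/2}$ on the cube, and enlarges the domain to $\Reals^{k-1}\times[-\frac12(1-2\alpha)k^{1/2},\frac12(1-2\alpha)k^{1/2}]$. It then evaluates the Gaussian integral exactly as you do.
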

\begin{proof}
 Let $T: (z_1,\ldots,z_k)\mapsto(w_1,\ldots,w_k)$ be an
 orthogonal transformation such that
 $w_k=k^{-1/2}\sum_{i=1}^k z_i$.
 An example is Helmert's transformation~\cite[Section 23:14]{BN}.
 Then we find that $\bar z = k^{-1/2}w_k$
 and $\sum_{i=1}^k (z_i-\bar z)^2 = \sum_{i=1}^{k-1} w_i^2$,
 the last sum not depending on~$w_k$.

 The distribution of $(z_1,\ldots,z_k)$ is uniform
 on the cube $Q=[\alpha-\frac12,\frac12-\alpha]^k$.
 Since $T$ has Jacobian 1 on account of being orthogonal,
 we have
 \begin{align*}
    \E \exp\Bigl(-\beta\sum_{i=1}^k (z_i-\bar z)^2\Bigr)
    &= (1-2\alpha)^{-k}\int_Q
       \exp\Bigl(-\beta\sum_{i=1}^k (z_i-\bar z)^2\Bigr)
        \,dz_1\cdots dz_k \\
    &= (1-2\alpha)^{-k}\int_{T(Q)}
        e^{-\beta\sum_{i=1}^{k-1} w_i^2}\,dw_1\cdots dw_k.
 \end{align*}
 The value of $\abs{w_k}$ in $T(Q)$ is at most
 $\frac12 (1-2\alpha) k^{1/2}$,
 so as an upper bound we can cover $T(Q)$ by $\Reals^{k-1}\times
  [-\frac12 (1-2\alpha) k^{1/2},\frac12 (1-2\alpha) k^{1/2}]$
  to obtain
 \begin{align*}
    \E \exp\Bigl(-\beta\sum_{i=1}^k (z_i-\bar z)^2\Bigr)
    &\le (1-2\alpha)^{-(k-1)} k^{1/2} \int_{\Reals^{k-1}} e^{-\beta\sum_{i=1}^{k-1} w_i^2}
         \,dw_1\cdots dw_{k-1} \\
    &= (1-2\alpha)^{-(k-1)} k^{1/2} \biggl(\, \int_{-\infty}^\infty e^{-\beta x^2}\,dx
             \biggr)^{k-1} \\
    &= k^{1/2} \biggl(\frac{\pi}{(1-2\alpha)^2\beta}\biggr)^{(k-1)/2}. \qedhere
 \end{align*}
\end{proof}

\begin{proof}[Proof of Theorem~\ref{t:general}]
 Let $k\ge \sqrt{163 n/9}$ be an integer.

 Let $\avec=(a_1,\ldots,a_n)$ be a random vector whose
 components are independent random variables from the
 uniform distribution on $[\alpha,1-\alpha]$.
 Generate a random graph $G$ with vertices $\{1,\ldots,n\}$.
 The edges of $G$ appear independently, with edge $ij$
 having probability $(a_i+a_j)/2$.

 Let $G[k]$ denote the subgraph of $G$ induced by vertices
 $\{1,\ldots,k\}$. By symmetry, the probability that $G$ has
 an induced regular subgraph of order $k$
 is bounded above by
  \begin{equation}\label{genbound}
   \binom{n}{k} \sum_{d=0}^{k-1}\, \Pr(G[k] \text{~is $d$-regular}).
 \end{equation}

 We proceed by dividing the range of $d$ into two cases.

 \noindent\textbf{Case (a): $\lambda\notin [\alpha-\eps,1-\alpha+\eps]$}

 By symmetry we can take $\lambda<\alpha-\eps$.
 Conditional on $\avec$, the number of edges is the sum of
 $\binom k2$ independent Bernoulli random variables with
 probabilities in $[\alpha,1-\alpha]$.
 The mean number of edges lies in
 $\bigl[\alpha K,(1-\alpha)K\bigr]$,
 whereas to be $d$-regular for $\lambda<\alpha-\eps$
 requires at most $(\alpha-\eps)K$ edges.
 Using a standard tail bound such as that of 
 McDiarmid~\cite{McD}, we find that there is some 
 constant $c_1>0$ such that
 \[
    \Pr(G[k] \text{~is $d$-regular for some
      $\textstyle \lambda<\alpha-\eps$} \mid \avec)
    \le e^{-c_1k^2}.
 \]
 Since the bound is independent of $\avec$, it also
 holds unconditionally.

 \medskip
 \noindent\textbf{Case (b): $\lambda\in [\alpha-\eps,1-\alpha+\eps]$}
 Take a fixed $d$-regular graph $H\in\RG(k,d)$.
 Define $\bar a = \frac 1k \sum_{i=1}^k a_i$ and
 $y_i = a_i-\bar a$ for $1\le i\le k$.
 Conditional on $\avec$, we have
 \begin{align*}
    \Pr(G[k] = H \mid \avec) &= \prod_{ij\in E(H)} \bigl(\dfrac12(a_i+a_j)\bigr)
      \prod_{ij\notin E(H)} \bigl(1-\dfrac12(a_i+a_j)\bigr) \\
      &=  \prod_{ij\in E(H)} \bigl(\bar a + \dfrac12(y_i+y_j)\bigr)
      \prod_{ij\notin E(H)} \bigl(1 - \bar a - \dfrac12(y_i+y_j)\bigr).
 \end{align*}
 Now apply Lemma~\ref{uvbound} to the first product with
 $u=\bar a+(y_i+y_j)/2, v=\bar a$, and to the second product with
 $u=1-\bar a-(y_i+y_j)/2, v=1-\bar a$.
 This gives
 \begin{align*}
    \Pr(G[k] = H \mid \avec) \le \bar a^m (1-\bar a)^{K-m}
    \exp\biggl(&\, \sum_{ij\in E(H)} 
       \Bigl( \Dfrac1{2\bar a}(y_i+y_j)
       - \Dfrac{c_2}{4} (y_i+y_j)^2\Bigr) \\
     &{\quad}  + \sum_{ij\notin E(H)} 
        \Bigl( -\Dfrac1{2(1-\bar a)}(y_i+y_j) - \Dfrac{c_2}{4}(y_i+y_j)^2\Bigr)\biggr),
 \end{align*}
 where $m=kd/2$.
 Since $\sum_{i=1}^k y_i=0$ and $H$ is regular, the linear terms
 vanish and, moreover
 $\sum_{1\le i<j\le k} (y_i+y_j)^2=(k-2)\sum_{i=1}^k y_i^2$.
 Therefore
 \[
    \Pr(G[k] = H \mid \avec) \le \bar a^m (1-\bar a)^{K-m}
    \exp\biggl( -\Dfrac{c_2}{4} (k-2)\sum_{i=1}^k y_i^2\biggr).
 \]
 Let $s^2$ denote $\sum_{i=1}^k y_i^2$.
 Since the bound above is independent of $H$,
 we have
 \[
     \Pr(G[k] \text{~is $d$-regular} \mid \avec) \le \bar a^m (1-\bar a)^{K-m}
       e^{-c_2(k-2)s^2/4} \,\Abs{\RG(k,d)}.
 \]
 The maximum value of $\bar a^m (1-\bar a)^{K-m}$ occurs when
 $\bar a=\lambda=d/(k-1)$ so, by Lemma~\ref{regularcount},
 \[
   \Pr(G[k] \text{~is $d$-regular} \mid\avec)
   = O(1)\, e^{-c_2(k-2)s^2/4}
     \biggl( \binom{k-1}{d}\lambda^d(1-\lambda)^{k-d-1}\biggr)^k.
 \]
 The quantity in the large parentheses is the value of a binomial
 distribution at its mean.  Applying Stirling's approximation,
 \[
     \binom{k-1}{d}\lambda^d(1-\lambda)^{k-d-1}
     = \frac{1+O(1/k)}{\sqrt{2\pi\lambda(1-\lambda)k}}. 
 \]
 For $\lambda\in[\alpha-\eps,1-\alpha+\eps]$, the smallest
 value of $2\pi\lambda(1-\lambda)$ occurs at the ends, so
 \[
   \binom{k-1}{d}\lambda^d(1-\lambda)^{k-d-1} 
   \le (1+O(1/k))c_3k^{-1/2},
   \text{~~where~~}
    c_3 = \frac{1}{\sqrt{2\pi(\alpha-\eps)(1-\alpha+\eps)}}.
 \]
 Since there are fewer than $k$ possible values of $d$,
 we have
 \begin{equation}\label{3rdbound}
    \Pr(G[k] \text{~is regular}\mid\avec)
    = O(k) e^{-c_2(k-2)s^2/4} c_3^k k^{-k/2}.
 \end{equation}
 
Now we can take the expectation over $\avec$ using
Lemma~\ref{cube} with $\beta=\frac14 c_2(k-2)$.
Since $(k-2)^{(k-1)/2}=\Theta(k^{-1/2}) k^{k/2}$ we obtain
from~\eqref{3rdbound} that
\[
   \Pr(G[k] \text{~is regular})
    = O(k^2) \biggl( \frac{4\pi}{(1-2\alpha)^2 c_2}\biggr)^{(k-1)/2} c_3^k k^{-k}.
\]
The contribution $e^{-c_1k^2}$ from Case~(a) is negligible
in comparison.
Inserting the values of $c_2$ and $c_3$ we obtain
\[
       \Pr(G[k] \text{~is regular}) = O(k^2)
        \biggl( \frac{2(1-\alpha)}
        {(1-2\alpha)\sqrt{(\alpha-\eps)(1-\alpha+\eps)}}
        \biggr)^k k^{-k}.
\]
Finally, as in~\eqref{genbound},
multiply by $\binom nk\le (ne/k)^k$ to cover
all $k$-subsets of $V(G)$, and recall that $n\le \frac{9}{163} k^2$.
This gives that the probability that $G$
contains an induced regular subgraph of order $k$ is
at most
\[
  O(k^2) \biggl(\frac{18 e (1-\alpha)}{163(1-2\alpha)\sqrt{(\alpha-\eps)(1-\alpha+\eps)}}\biggr)^k.
\]
Now setting $\alpha=0.191$ and $\eps=0.0001$, we obtain
the bound $O(0.99986^k k^2)$.
Since this is uniform over $k$, we can
sum over $\{k\st n\le \frac{9}{163} k^2 \text{~and~}k\le n\}$
and still obtain~$o(1)$.
This implies that $\Nge k\ge\frac9{163}k^2$, completing the proof.
\end{proof}

\section{Proof of Theorem~\ref{t:improved}}\label{s:improved}

In this section we prove our main theorem.
The approach is essentially the same as in the previous
section, but the model is more complex.

\begin{lemma}[{\cite[Lemma~5.7]{LV}}]\label{l:lctail}
  If $X$ is a real random variable with a log-concave density,
  mean $\mu$ and standard deviation $s$, then for all $t\ge 0$,
  \[
      \Pr(\abs{X-\mu}\ge ts) \le e^{1-t}.
  \]
\end{lemma}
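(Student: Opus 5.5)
Since Lemma~\ref{l:lctail} is quoted from \cite{LV}, we could simply cite it. To give a self-contained argument, we would first normalise by the affine change $X\mapsto (X-\mu)/s$, which preserves log-concavity of the density. This reduces the claim to a density $g$ with mean $0$ and variance $1$, where we must show $\Pr(\abs{X}\ge t)\le e^{1-t}$. For $t\le 1$ the right-hand side is at least $1$, so there is nothing to prove. Hence assume $t>1$, and by symmetry (replacing $X$ by $-X$) treat the two one-sided tails $F(u)=\Pr(X\ge u)$ and $\Pr(X\le -u)$ separately.

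The structural input is that $F$ is itself log-concave, since tail integrals of log-concave densities are log-concave (Prékopa). Two standard consequences would be used. First, Grünbaum's inequality $F(0)=\Pr(X\ge \mu)\ge 1/e$ holds for log-concave laws. Second, write $\phi=-\log F$, which is convex and nondecreasing. For $0<u_0<u$, convexity of $\phi$ on $[0,u]$ gives
\[
 \phi(u)\ \ge\ \phi(0)+\frac{u}{u_0}\bigl(\phi(u_0)-\phi(0)\bigr).
\]
So once $F(u_0)$ is known to be small relative to $F(0)\le 1$, the tail decays exponentially with rate $u_0^{-1}(\phi(u_0)-\phi(0))$. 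The anchor $F(u_0)$ would come from the variance normalisation, for instance by Chebyshev, $F(u_0)+\Pr(X\le -u_0)\le u_0^{-2}$.

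The main obstacle is the sharp constant. The crude route above yields a bound of the form $Ce^{-ct}$ with $c<1$, for example $c=1/e$ using $u_0=e$. That suffices for the asymptotic applications in the paper, but it is not the stated $e^{1-t}$. To get the exact constant we would try the argument of Lovász and Vempala. The idea is that, among log-concave densities with unit variance, tails are extremised by a (shifted) exponential density: $\phi$ convex with its variance fixed forces the one-sided tail to be dominated by that of $\mathrm{Exp}(1)$ recentred at its mean. This would be carried out either by a localisation or "needle" reduction to densities of the form $e^{-\text{linear}}$ on an interval, or by directly comparing $g$ with the exponential density crossing it at the right point. For the centred $\mathrm{Exp}(1)$ law one has $\Pr(X\ge t)=e^{-(t+1)}$ and $\Pr(X\le -t)=0$ for $t\ge1$, which leaves room to absorb the two-sided sum into $e^{1-t}$. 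The comparison step, showing that fixing the mean and variance forces $\phi$ to grow at least linearly with slope $1$ from a suitable point, is where we expect the real work to lie.
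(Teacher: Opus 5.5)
Citing the lemma, as your first sentence proposes, is exactly what the paper does: it gives no proof and imports the lemma from Lov\'asz--Vempala. As a self-contained proof, however, your sketch has a genuine gap. The crude route is sound: Gr\"unbaum plus Chebyshev at $u_0=e$ gives $\Pr(X\ge u)\le e^{-1-u/e}$ for $u\ge e$. You are also right that this would already serve the lemma's only use, in Lemma~\ref{l:concentration}, where $t\approx 6\log k$ and a per-coordinate bound $o(1/k)$ suffices. But it is not $e^{1-t}$.

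The step meant to give the sharp constant is only a plan, and the comparison it aims at is false. A log-concave $X$ with mean $0$ and variance $1$ need not satisfy $\Pr(X\ge t)\le e^{-(t+1)}$. For the uniform law on $[-\sqrt3,\sqrt3]$, $\Pr(X\ge1)=(\sqrt3-1)/(2\sqrt3)\approx0.211>e^{-2}\approx0.135$. For the standard Gaussian, $\Pr(Z\ge1.2)\approx0.115>e^{-2.2}\approx0.111$. So the centred exponential is not a pointwise extremiser of one-sided tails. A needle reduction lands on log-affine densities on intervals, the uniform among them, so it cannot deliver that comparison either.

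What is true, and enough, is a comparison with a factor $e$ of slack: $\Pr(X\ge t)\le e^{-t}$ for $t>2$. Here is one way to get it.
\begin{itemize}
\item For $1\le t\le2$, Chebyshev gives $\Pr(\abs{X}\ge t)\le t^{-2}\le e^{1-t}$.
\item For $t>2$, use the log-concavity of $F$ as a representation. The map $\theta=\phi^{-1}$ is concave and nondecreasing, and $X$ has the law of $\theta(E)$ with $E\sim\mathrm{Exp}(1)$.
\item Then $\theta(s)=\int_0^\infty\theta'(u)\bigl(\mathbf 1[u<s]-e^{-u}\bigr)\,du$, and by Cauchy--Schwarz $\operatorname{Cov}(X,E)=\int_0^\infty\theta'(u)\,ue^{-u}\,du\le1$.
\item Since $\theta'\ge0$ is nonincreasing (a nonnegative mixture of indicators $\mathbf 1[u<v]$ plus a constant), $\theta(s)\le s\operatorname{Cov}(X,E)\le s$ follows once $W(v)=\min(v,s)-(1-e^{-v})-s\bigl(1-(1+v)e^{-v}\bigr)\le0$ for all $v\ge0$.
\item The function $W$ is maximised at $v=0$ or $v=s$, with $W(0)=0$ and $W(s)=e^{-s}(1+s+s^2)-1$. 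So the condition holds for $s\ge s_*\approx1.79$, the positive root of $e^s=1+s+s^2$.
\item Now put $s=\phi(t)$, so $\theta(s)=t$; if $t$ lies beyond the support there is nothing to prove. If $s<s_*$, then $t\le\theta(s_*)\le s_*<2$, a contradiction. Hence $t=\theta(s)\le s$ and $\Pr(X\ge t)=e^{-s}\le e^{-t}$.
\item Applying this to $-X$ as well gives $\Pr(\abs{X}\ge t)\le 2e^{-t}\le e^{1-t}$.
\end{itemize}
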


For $0\le d\le k-1$, define $\lambda=d/(k-1)$ and
\[
   q_\lambda = \binom{k-1}{d}\lambda^d (1-\lambda)^{k-1-d},
\]
with the usual convention that $0^0=1$.

\begin{lemma}\label{l:best}
  Fix $\delta\in(0,\frac12)$.
  Then uniformly for $\lambda\in[\delta,1-\delta]$,
  \[
      q_\lambda = \frac{e^{O(1/k)}}{\sqrt{2\pi\lambda(1-\lambda)(k-1)}}.
  \]
\end{lemma}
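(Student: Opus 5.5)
This is Stirling's formula applied to the binomial coefficient, with careful bookkeeping of the error terms. Set $N=k-1$, so that $d=\lambda N$ and $N-d=(1-\lambda)N$. Since $\lambda\in[\delta,1-\delta]$ with $\delta$ fixed, both $d\ge\delta N$ and $N-d\ge\delta N$. In particular $d$, $N-d$ and $N$ all tend to infinity at rate $\Theta(k)$, uniformly in $\lambda$, and the convention $0^0=1$ never comes into play for large $k$.

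We use Stirling's formula in the form $m! = \sqrt{2\pi m}\,(m/e)^m e^{\theta_m/(12m)}$ with $\theta_m\in(0,1)$, valid for all $m\ge1$. Applying it to $N!$, $d!$ and $(N-d)!$ gives
\[
  \binom{N}{d} = \sqrt{\frac{N}{2\pi d(N-d)}}\;\frac{N^N}{d^d(N-d)^{N-d}}\;e^{E},
\]
where $\abs{E}\le \frac1{12N}+\frac1{12d}+\frac1{12(N-d)}\le \frac{1}{12N}\bigl(1+\frac2\delta\bigr)=O(1/k)$, uniformly over the allowed range of $\lambda$.

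Now substitute $d=\lambda N$ and $N-d=(1-\lambda)N$. The exponential factor becomes
\[
  \frac{N^N}{d^d(N-d)^{N-d}}=\lambda^{-d}(1-\lambda)^{-(N-d)},
\]
which cancels exactly against $\lambda^d(1-\lambda)^{k-1-d}$ in $q_\lambda$. The prefactor becomes
\[
  \sqrt{\frac{N}{2\pi\lambda(1-\lambda)N^2}}=\frac{1}{\sqrt{2\pi\lambda(1-\lambda)(k-1)}}.
\]
Together these give the claimed formula.

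There is no real obstacle here. The one point needing care is uniformity: the constant in $O(1/k)$ must depend only on $\delta$. This is exactly what the lower bound $\min(d,N-d)\ge\delta N$ guarantees, because it controls the Stirling error terms for $d!$ and $(N-d)!$. Small values of $k$ are trivially absorbed into the $O(\cdot)$, since for $k\le k_0$ both sides are positive and bounded above and below by constants depending only on $\delta$ and $k_0$.
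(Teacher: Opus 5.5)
Your argument is correct and takes the paper's approach: the paper simply says the lemma is an immediate consequence of Stirling's formula with remainder, and you have carried out that computation, with uniformity in $\lambda$ secured by $\min(d,N-d)\ge\delta N$. Your closing remark about small $k$ is not needed, because $d\ge\delta N>0$ already forces $d,N-d\ge 1$, and the remainder bound then holds for every $k\ge 2$.
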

\begin{proof}
  This is an immediate consequence of Stirling's formula with remainder.
\end{proof}

Write $\sigma(t)=\frac{e^t}{1+e^t}$ and
$g(t)=\log(1+e^t)$.
Note that
\[
  g'(t) = \sigma(t), \quad
  g''(t) = \dfrac14\sech^2\Dfrac t2, \quad
  g'''(t) = -\dfrac14\tanh\Dfrac t2\sech^2\Dfrac t2,
\]
and these derivatives are all bounded on $\Reals$.

\subsection*{The random graph model}

Fix $T>0$.
Let $\Xvec=(X_1,\ldots,X_n)$ be a random vector whose
components are independent and have a logistic
distribution truncated to~$[-T,T]$.
Explicitly, the density of each $X_i$ is
\[
    \begin{cases}
         \displaystyle\frac{\sech^2 x}{2\tanh T},
          & \,\text{if $x\in[-T,T]$}; \\[0.6ex]
         0, & \,\text{otherwise}. 
    \end{cases}
\]

Conditional on $\Xvec$, 
generate a random graph $G$ with independent edges such
that, for distinct vertices $i,j$, the probability of
an edge $ij$ is $p_{ij}=\sigma(X_i+X_j)$.

Let $G[k]$ be the subgraph induced by the first $k$
vertices of $G$. Since the model is invariant under
permutation of the vertices, we have
\begin{equation}\label{union}
   \Pr(\text{$G$ has an induced regular subgraph of order $k$})
   \le \binom nk \Pr(\text{$G[k]$ is regular}).
\end{equation}
As before, define $\lambda=d/(k-1)$.
With the usual conventions $0^0=1$ and
$0\log 0=0$, define
\begin{align*}
    \psi_\lambda(z) &= g(z) - \lambda z +
  \lambda\log \lambda + (1-\lambda)\log(1-\lambda),\\
E_{\lambda,T} &= \E \exp\biggl( - \sum_{1\le i<j\le k} \psi_\lambda(X_i+X_j)\biggr).
\end{align*}
For $0<\lambda<1$, $\psi'_\lambda(z)=\sigma(z)-\lambda$
and $\psi''_\lambda(z)=\frac14\sech^2(z/2)>0$.
Thus $\psi_\lambda(z)$ is uniquely minimised when
$z=\log(\lambda/(1-\lambda))$, where its value is zero.
Consequently, $\psi_\lambda(z)\ge0$ for all~$z$.
By continuity, $\psi_0(z)\ge0$ and $\psi_1(z)\ge0$
as well.

\begin{lemma}\label{l:Pbounds}
  Assume $0\le d\le k-1$.  Then
  \[
     \Pr(\text{$G[k]$ is $d$-regular})
     = O( q_\lambda^k E_{\lambda,T} ).
  \]
\end{lemma}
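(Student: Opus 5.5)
Conditioning on $\Xvec$, for a fixed $H\in\RG(k,d)$ with $m=kd/2$ edges we have
\[
  \Pr(G[k]=H\mid\Xvec)=\prod_{ij\in E(H)}\sigma(X_i+X_j)\prod_{ij\notin E(H)}\bigl(1-\sigma(X_i+X_j)\bigr).
\]
Using $\log\sigma(z)=z-g(z)$ and $\log(1-\sigma(z))=-g(z)$, this equals
\[
  \exp\Bigl(\sum_{ij\in E(H)}(X_i+X_j)-\sum_{1\le i<j\le k} g(X_i+X_j)\Bigr).
\]
Because $H$ is $d$-regular, $\sum_{ij\in E(H)}(X_i+X_j)=d\sum_i X_i$. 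The same linear sum over all pairs is $(k-1)\sum_i X_i$, so
\[
  \sum_{ij\in E(H)}(X_i+X_j)=\lambda\sum_{i<j}(X_i+X_j).
\]
Adding and subtracting the constant $K\bigl(\lambda\log\lambda+(1-\lambda)\log(1-\lambda)\bigr)$ gives the exact identity
\[
  \Pr(G[k]=H\mid\Xvec)=\lambda^m(1-\lambda)^{K-m}\exp\Bigl(-\sum_{i<j}\psi_\lambda(X_i+X_j)\Bigr),
\]
using $\lambda K=m$. The extreme cases $\lambda\in\{0,1\}$ follow with the conventions $0^0=1$, $0\log0=0$; there the probabilities vanish or are handled by continuity.

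This expression does not depend on $H$. Summing over $H\in\RG(k,d)$ and taking the expectation over $\Xvec$ therefore gives exactly
\[
  \Pr(\text{$G[k]$ is $d$-regular})=\Abs{\RG(k,d)}\,\lambda^m(1-\lambda)^{K-m}E_{\lambda,T}.
\]
Now apply Lemma~\ref{regularcount}:
\[
  \Abs{\RG(k,d)}=\Theta(1)\bigl(\lambda^\lambda(1-\lambda)^{1-\lambda}\bigr)^K\binom{k-1}{d}^k.
\]
The factor $\lambda^m(1-\lambda)^{K-m}$ is the reciprocal of $(\lambda^\lambda(1-\lambda)^{1-\lambda})^K$, since $m=\lambda K$ and $K-m=(1-\lambda)K$. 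The product therefore becomes
\[
  \Theta(1)\binom{k-1}{d}^k.
\]
This does not yet match $q_\lambda^k$, so I need to recheck the bookkeeping.

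We have
\[
  q_\lambda^k=\binom{k-1}{d}^k\lambda^{dk}(1-\lambda)^{(k-1-d)k},
\]
and $dk=2m$, $(k-1-d)k=2(K-m)$. Hence $q_\lambda^k=\binom{k-1}{d}^k\lambda^{2m}(1-\lambda)^{2(K-m)}$. Moreover $(\lambda^\lambda(1-\lambda)^{1-\lambda})^K=\lambda^m(1-\lambda)^{K-m}$, which is the same quantity, not its reciprocal. So the correct combination is
\[
  \Abs{\RG(k,d)}\lambda^m(1-\lambda)^{K-m}=\Theta(1)\binom{k-1}{d}^k\lambda^{2m}(1-\lambda)^{2(K-m)}=\Theta(1)\,q_\lambda^k,
\]
which gives the claimed bound $O(q_\lambda^kE_{\lambda,T})$ uniformly in $d$.

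The main obstacle is only the careful bookkeeping of the constant term inside $\psi_\lambda$ and the endpoint conventions. Lemma~\ref{regularcount} already supplies uniformity in $d$. At $d=0$ and $d=k-1$ one can check directly that $\Abs{\RG}=1$ and $q_\lambda=1$.
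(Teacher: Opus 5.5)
Your proof is correct and follows essentially the same route as the paper: you show that $\Pr(G[k]=H\mid\Xvec)=\bigl(\lambda^\lambda(1-\lambda)^{1-\lambda}\bigr)^K e^{-\sum_{i<j}\psi_\lambda(X_i+X_j)}$ independently of $H$, then multiply by $\Abs{\RG(k,d)}$ from Lemma~\ref{regularcount} (uniform in $d$) to get $\Theta(1)\,q_\lambda^k E_{\lambda,T}$. In a final write-up, drop the mistaken ``reciprocal'' detour and keep only your corrected bookkeeping $\lambda^m(1-\lambda)^{K-m}=\bigl(\lambda^\lambda(1-\lambda)^{1-\lambda}\bigr)^K$; also, at $d\in\{0,k-1\}$ the probabilities do not vanish, since the identity simply holds exactly under the conventions $0^0=1$ and $0\log 0=0$.
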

\begin{proof}
  Let $H$ be a $d$-regular graph of order~$k$. Then
  \begin{align*}
      \Pr(G[k]=H \mid \Xvec=\xvec) &= \prod_{ij\in E(H)} p_{ij} \prod_{ij\notin E(H)}(1-p_{ij}) \\
      &= \exp\biggl(\, \sum_{ij\in E(H)} \,(x_i+x_j)
            - \sum_{1\le i<j\le k} g(x_i+x_j)\biggr) \\
      &= \bigl(\lambda^\lambda(1-\lambda)^{1-\lambda}\bigr)^K
      \exp\biggl(-\sum_{1\le i<j\le k} \psi_\lambda(x_i+x_j)\biggr),
  \end{align*}
  where $K=\binom k2$.
  Since this quantity is independent of $H$ (the reason for
  our definition of $\sigma(t)$), we can multiply by the
  number of regular graphs from Lemma~\ref{regularcount}
  and then take the expectation over $\Xvec$ to obtain
  \[
    \Pr(\text{$G[k]$ is $d$-regular})
    = \Abs{\RG(k,d)} \bigl(\lambda^\lambda(1-\lambda)^{1-\lambda}\bigr)^K E_{\lambda,T}
    = O( q_\lambda^k E_{\lambda,T} ). \qedhere
  \]
\end{proof}

Define
\begin{align*}
  U(x) &= 2\log\cosh x \\
  \varPhi_\lambda(\xvec) &= \sum_{i=1}^k U(x_i)
   + \sum_{1\le i<j\le k} \psi_\lambda(x_i+x_j).
\end{align*}
Then
\begin{equation}\label{Ilt}
  E_{\lambda,T} = (2\tanh T)^{-k} I_{\lambda,T},
  \quad\text{where}\quad I_{\lambda,T}
   = \int_{[-T,T]^k} e^{-\varPhi_\lambda(\xvec)}\,d\xvec.
\end{equation}

\subsection*{Evaluation of the integral}

In this subsection we estimate $I_{\lambda,T}$.
For matrices $A_1,A_2$, write $A_1\succeq A_2$ if
$\uvec\Trans5 A_1\uvec\ge \uvec\Trans5 A_2\uvec$
for all~$\uvec$.

Define
\[
    b_T = \sigma(-2T) = (1+e^{2T})^{-1},
    \quad
    B_T = b_T(1-b_T) = \dfrac14\sech^2 T.
\]

\begin{lemma}\label{l:Hessian}
   For every $\xvec\in [-T,T]^k$,
   \[
      \nabla^2\varPhi_\lambda(\xvec)\succeq
      B_T((k+6)I+J),
   \]
   where $J$ is the $k\times k$ all-ones matrix.
\end{lemma}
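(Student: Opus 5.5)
The plan is to compute the Hessian of $\varPhi_\lambda$ explicitly, write the quadratic form $\uvec\Trans5\nabla^2\varPhi_\lambda(\xvec)\uvec$ as a sum of nonnegative pieces, and bound each coefficient below by its worst value over the box $[-T,T]^k$.

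First, the second derivatives. Since $\psi_\lambda(z)=g(z)-\lambda z+\text{const}$, we have $\psi_\lambda''=g''$. This holds for every $\lambda\in[0,1]$, including the endpoints, so no case split is needed. Also $U''(x)=2\sech^2 x$. The term $\psi_\lambda(x_i+x_j)$ contributes $g''(x_i+x_j)$ to the entries $(i,i)$, $(j,j)$, $(i,j)$ and $(j,i)$. Hence for any $\uvec\in\Reals^k$,
\[
  \uvec\Trans5\nabla^2\varPhi_\lambda(\xvec)\,\uvec
  = \sum_{i=1}^k 2\sech^2(x_i)\,u_i^2
   + \sum_{1\le i<j\le k} g''(x_i+x_j)\,(u_i+u_j)^2 .
\]
Both sums have nonnegative coefficients, so it suffices to bound each coefficient from below.

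Next, the coefficient bounds. The function $g''(t)=\frac14\sech^2(t/2)=\sigma(t)(1-\sigma(t))$ is even and decreasing in $\abs t$. For $\xvec\in[-T,T]^k$ we have $\abs{x_i+x_j}\le 2T$, so
\[
  g''(x_i+x_j)\ge \sigma(2T)\bigl(1-\sigma(2T)\bigr)=b_T(1-b_T)=B_T .
\]
Similarly, $\abs{x_i}\le T$ gives $2\sech^2 x_i\ge 2\sech^2 T=8B_T$.

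Finally, I will use the identity already noted in Section~\ref{s:general}:
\[
  \sum_{i<j}(u_i+u_j)^2=(k-2)\sum_i u_i^2+\Bigl(\sum_i u_i\Bigr)^2 .
\]
Combining this with the coefficient bounds, the form is at least
\[
  8B_T\twonorm{\uvec}^2 + B_T\Bigl((k-2)\twonorm{\uvec}^2+(\one\trans\uvec)^2\Bigr)
  = \uvec\Trans5 B_T\bigl((k+6)I+J\bigr)\uvec ,
\]
which is the claimed bound.

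There is no real obstacle. The only points needing care are the monotonicity of $\sech^2$ in $\abs t$, which places the worst case at the corner $\abs{x_i+x_j}=2T$, and the bookkeeping of how each pair term spreads over the Hessian entries.
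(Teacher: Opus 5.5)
Your proof is correct and is essentially the paper's own argument: write the quadratic form as $\sum_i U''(x_i)u_i^2+\sum_{i<j}g''(x_i+x_j)(u_i+u_j)^2$, bound $U''(x_i)=2\sech^2 x_i\ge 8B_T$ and $g''(x_i+x_j)\ge B_T$ on the box, then expand with $\sum_{i<j}(u_i+u_j)^2=(k-2)\sum_i u_i^2+(\sum_i u_i)^2$. One cosmetic point: Section~\ref{s:general} uses that identity only in its mean-zero form (without the $(\sum_i u_i)^2$ term), so justify the general version by its one-line expansion rather than citing it.
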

\begin{proof}
  We have $U''(x)=2\sech^2 x \ge 2\sech^2 T = 8B_T$.
  Also, if $x_i,x_j\in[-T,T]$,
  \[
      \psi_\lambda''(x_i+x_j) = g''(x_i+x_j)
      = \dfrac14\sech^2((x_i+x_j)/2)
      \ge B_T.
  \]
  For any $\uvec\in\Reals^k$, we thus have
  \[
     \uvec\Trans1\nabla^2\varPhi_\lambda(\xvec)\uvec
     \ge 8B_T\sum_{i=1}^k u_i^2
       + B_T\sum_{1\le i<j\le k} (u_i+u_j)^2.
  \]
  The lemma follows on expanding the last term using
  $\sum_{i<j} (u_i+u_j)^2=(k-2)\sum_i u_i^2
  + \bigl(\sum_i u_i\bigr)^2$.
\end{proof}

Since by Lemma~\ref{l:Hessian} $\varPhi_\lambda$ is
strictly convex, and it is also invariant under permutation
of the coordinates, it has a unique minimiser in $[-T,T]^k$
of the form $a\one$, where $\one$ is the all-ones vector.

\begin{lemma}\label{l:minimiser}
   $\varPhi_\lambda(\xvec)$ is uniquely minimised at
   $a\one $, where
   \[
      a = \max\bigl\{-T,\min\{a_\ast,T\}\bigr\},
      \quad\text{where}\quad
      a_\ast = \dfrac12\log\frac{d+2}{k+1-d}.
   \]
\end{lemma}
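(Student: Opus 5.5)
Since Lemma~\ref{l:Hessian} shows that $\varPhi_\lambda$ is strictly convex on the cube $[-T,T]^k$, and $\varPhi_\lambda$ is symmetric under coordinate permutations, its unique minimiser $\xvec^\ast$ must be fixed by every permutation. Otherwise a permuted copy of $\xvec^\ast$ would be a second minimiser, or averaging the copies would give a strictly smaller value. So $\xvec^\ast=a\one$ for some $a\in[-T,T]$. This reduces the problem to minimising the one-variable function
\[
   h(t)=\varPhi_\lambda(t\one)=k\,U(t)+K\,\psi_\lambda(2t),\qquad t\in[-T,T].
\]
This function is strictly convex (it is a sum of strictly convex functions), so its minimiser over an interval is the projection of its unconstrained minimiser onto the interval. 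That accounts for the form $a=\max\{-T,\min\{a_\ast,T\}\}$, provided $a_\ast$ is the unconstrained minimiser of $h$ on $\Reals$.

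To find $a_\ast$ I would solve $h'(t)=0$. Using $U'(t)=2\tanh t$ and $\psi_\lambda'(z)=\sigma(z)-\lambda$,
\[
   h'(t)=2k\tanh t+2K\bigl(\sigma(2t)-\lambda\bigr).
\]
Put $w=e^{2t}$. Then $\tanh t=(w-1)/(w+1)$ and $\sigma(2t)=w/(1+w)$. Divide by $2k$ and use $K/k=(k-1)/2$ and $\lambda(k-1)=d$. The equation becomes
\[
   \frac{w-1}{w+1}+\frac{(k-1)w-d(1+w)}{2(1+w)}=0,
\]
that is, $2(w-1)+(k-1-d)w-d=0$. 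This gives $w(k+1-d)=d+2$, so $w=(d+2)/(k+1-d)$. Since $0\le d\le k-1$, both numerator and denominator are positive, so $w>0$ and $a_\ast=\frac12\log w$ is well defined. The cases $d=0$ and $d=k-1$ ($\lambda\in\{0,1\}$) need no special treatment, because the formula for $\psi_\lambda'$ holds there too.

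The only point needing care is the symmetry argument, including that the minimum is attained at all: $\varPhi_\lambda$ is continuous on a compact set, so it is. Strict convexity on the convex cube then gives uniqueness, and uniqueness together with permutation invariance forces the minimiser onto the diagonal. The rest is the elementary algebra above, which I expect to go through without obstacles.
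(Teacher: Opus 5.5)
Your proof is correct and takes essentially the paper's route: strict convexity from Lemma~\ref{l:Hessian} and permutation invariance put the minimiser on the diagonal, the diagonal derivative vanishes exactly at $a_\ast$, and convexity of the one-variable restriction clips the minimiser to $[-T,T]$. (Your $h'(t)$ is $k$ times the paper's coordinate partial $4\sigma(2t)-2+(k-1)(\sigma(2t)-\lambda)$.) Your algebra checks out, and your remarks on attainment of the minimum and on the endpoint cases $d=0$ and $d=k-1$ supply details the paper leaves implicit.
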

\begin{proof}
  Note that $\psi'_\lambda(2x)=\sigma(2x)-\lambda$.
  This means that the derivative of
  $\varPhi_\lambda(x\one)$ with respect to any
  coordinate is $4\sigma(2x)-2+(k-1)(\sigma(2x)-\lambda)$,
  which vanishes when $x=a_\ast$.
  This gives the unconstrained minimum. If it lies outside
  $[-T,T]^k$, by the convexity of $\varPhi_\lambda(x\one)$
  the minimum is at the nearest corner.
\end{proof}

Define
\[
 \rho = \frac{12\log k}{\sqrt{B_T k}}.
\]
Since $T$ is fixed, $b_T>0$ is fixed and $\rho\to0$.
Consequently, for all sufficiently large $k$, every value
of $\lambda$ considered in Lemmas~\ref{l:firstpart} and
\ref{l:secondpart} lies in
$[b_T/2,1-b_T/2]$. Thus Lemma~\ref{l:best} applies
uniformly in both lemmas with $\delta=b_T/2$.

\begin{lemma}\label{l:concentration}
  Let $a\one $ be the minimiser from Lemma~\ref{l:minimiser}
  and let $\mu_{\lambda,T}$ be the probability measure on
  $[-T,T]^k$ whose density is proportional
  to~$e^{-\varPhi_\lambda}$.
  Then, for all $\lambda\in[0,1]$,
  \[
      \E_{\mu_{\lambda,T}} (X_i-a)^2 \le\frac{1}{B_T(k+6)}
      \qquad (1\le i\le k).
  \]
  In addition, we have
  \[
     \mu_{\lambda,T}\bigl(\max\nolimits_i\, \abs{X_i-a}>\dfrac12\rho\bigr)
     = o(1).
  \]
\end{lemma}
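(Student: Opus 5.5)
The plan is to treat $\mu_{\lambda,T}$ as a strongly log-concave measure on the convex body $[-T,T]^k$ and use the Hessian bound of Lemma~\ref{l:Hessian}. Write $\Sigma = B_T((k+6)I+J)$, so that $\nabla^2\varPhi_\lambda\succeq\Sigma\succeq B_T(k+6)I$ on the cube. For the second-moment bound I will use the Brascamp--Lieb inequality, together with the first-order optimality of $a\one$. Brascamp--Lieb holds for a log-concave density restricted to a convex set, since the indicator of a convex set is log-concave. It gives $\Var_\mu(X_i)\le \E_\mu[(\nabla^2\varPhi_\lambda)^{-1}]_{ii}\le 1/(B_T(k+6))$. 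This bounds the variance, not $\E(X_i-a)^2$, so I also need to control the bias $\E X_i - a$.

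For the bias I plan to use the standard comparison for strongly convex potentials. On a convex domain, with $a\one$ the minimiser over the domain, integration by parts yields
\[
  \E_\mu\langle \nabla\varPhi_\lambda(X),X-a\one\rangle \le k,
\]
where the boundary terms have the right sign because the outward normal points away from the minimiser. Combining this with strong convexity,
\[
  \langle\nabla\varPhi_\lambda(x)-\nabla\varPhi_\lambda(a\one),x-a\one\rangle\ge B_T(k+6)\twonorm{x-a\one}^2,
\]
and with $\langle\nabla\varPhi_\lambda(a\one),x-a\one\rangle\ge 0$ (first-order optimality), gives $\E\twonorm{X-a\one}^2\le k/(B_T(k+6))$. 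By the permutation symmetry of $\mu$, each coordinate then satisfies $\E(X_i-a)^2\le 1/(B_T(k+6))$, which is exactly the first claim.

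For the second claim, each marginal of $\mu$ is log-concave by Prékopa's theorem, since marginals of log-concave measures are log-concave. So I can apply Lemma~\ref{l:lctail} to $X_i$. Its mean $\mu_i$ satisfies $\abs{\mu_i-a}\le (B_T(k+6))^{-1/2}$, and its standard deviation satisfies $s\le (B_T(k+6))^{-1/2}$, both by the first part. The target is $\frac12\rho = 6\log k/\sqrt{B_Tk}$, which is $\gg s$. Taking $t=(\frac12\rho-\abs{\mu_i-a})/s\ge 6\log k - 1$, I get $\Pr(\abs{X_i-a}>\frac12\rho)\le e^{2}k^{-6}$. A union bound over $k$ coordinates then gives $O(k^{-5})=o(1)$.

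The main obstacle is justifying the bias bound with boundary terms on the truncated cube, especially when $a=\pm T$ is at a corner. I would handle this via the divergence theorem on each face, noting that $e^{-\varPhi_\lambda}(x-a\one)\cdot\nu\ge 0$ on $\partial[-T,T]^k$ because $a\in[-T,T]$. The resulting identity $\E\langle\nabla\varPhi_\lambda,X-a\one\rangle = k - (\text{nonnegative boundary term})$ then gives the required inequality.
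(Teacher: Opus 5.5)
Your proposal is correct and is essentially the paper's proof. The core is the same: integration by parts on the cube gives $\E\langle\nabla\varPhi_\lambda(X),X-a\one\rangle\le k$ with nonnegative boundary terms, which you combine with strong convexity and first-order optimality of $a\one$ and then permutation symmetry; the tail bound likewise uses Pr\'ekopa's theorem, Lemma~\ref{l:lctail} and a union bound. The Brascamp--Lieb step is harmless but superfluous, since your ``bias'' argument already bounds the full second moment $\E\twonorm{X-a\one}^2$ directly, exactly as the paper does.
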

\begin{proof}
  Write $\xvec=a\one+\vvec$ and $W(\vvec)=\varPhi_\lambda(a\one+\vvec) - \varPhi_\lambda(a\one)$.
  By Lemma~\ref{l:Hessian} we have $\nabla^2W(\vvec)\succeq B_T(k+6)I$ for all $\vvec\in [-T-a,T-a]^k$.
  Also, since $\zero$ minimizes $W$ on the convex set
  $[-T-a,T-a]^k$, $\nabla W(\zero)\trans \vvec\ge 0$
  for all $\vvec\in[-T-a,T-a]^k$.
  \begin{equation}\label{vDW}
     \vvec\trans \nabla W(\vvec)
     = \vvec\trans \nabla W(\zero) +
     \int_0^1 \vvec\trans \nabla^2 W(t\vvec)\vvec\,dt
     \ge B_T(k+6)\twonorm{\vvec}^2.
  \end{equation}
  Next, for $1\le i\le k$, integrating by parts with respect to $v_i$
  gives
  \[
      \int_{-T-a}^{T-a} \frac{\partial}{\partial v_i} v_ie^{-W(\vvec)}\,dv_i = \bigl[ v_ie^{-W(\vvec)}\bigr]_{v_i=-T-a}^{v_i=T-a} \ge 0.
  \]
  Noting that $\frac{\partial}{\partial v_i} v_ie^{-W(\vvec)}
  =e^{-W(\vvec)}-v_i\partial_iW(\vvec)e^{-W(\vvec)}$,
  integrate with respect to the other $k-1$ coordinates
  to obtain
  \[
     \int_{[-T-a,T-a]^k} v_i\partial_i W(\vvec)e^{-W(\vvec)}\,d\vvec \le
     \int_{[-T-a,T-a]^k} e^{-W(\vvec)}\,d\vvec.
  \]
  Now sum over $i$ to obtain
  \begin{equation}\label{intint}
     \int_{[-T-a,T-a]^k} \vvec\trans \nabla W(\vvec)e^{-W(\vvec)}\,d\vvec
     \le k \int_{[-T-a,T-a]^k} e^{-W(\vvec)}\,d\vvec.
  \end{equation}
  Apply~\eqref{vDW} to bound $\vvec\trans \nabla W(\vvec)$
  on the left side and divide by the normalizing constant
  (the integral on the right side) to obtain
  $\E_{\mu_{\lambda,T}} \twonorm{\vvec}^2 \le\frac{k}{B_T(k+6)}$.
  By symmetry, this is equivalent to the first claim in the lemma.

  To obtain the second claim, we apply Lemma~\ref{l:lctail}.
  Our bound on $\E_{\mu_{\lambda,T}} v_i^2$ implies that
  both $\abs{\E_{\mu_{\lambda,T}} v_i}$ and
  $\sqrt{\Var_{\mu_{\lambda,T}} v_i}$ are bounded by
  $(B_T(k+6))^{-1/2}$.
  Since marginals of log-concave distributions are 
  log-concave (the Dinghas--Leindler--Pr\'ekopa theorem,
  see~\cite[Thm.~5.1]{LV}), Lemma~\ref{l:lctail} gives
  $\Pr_{\mu_{\lambda,T}}(\abs{v_i} >\frac12\rho)\le e^2k^{-6}$
  for large enough~$k$,
  which implies the second claim of the lemma by
  the union bound.
\end{proof}

Define $z(\lambda)=\frac12\log(\lambda/(1-\lambda))$
and note that $\sigma(2z(\lambda))=\lambda$.

\begin{lemma}\label{l:firstpart}
\[
    \Pr(G[k]\text{~is $d$-regular}) \le e^{o(k)}
    \biggl(\frac{2}{\tanh T}\biggr)^k k^{-k}
\]
uniformly for $b_T+2\rho\le\lambda\le 1-b_T-2\rho$.
\end{lemma}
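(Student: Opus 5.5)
By Lemma~\ref{l:Pbounds} and~\eqref{Ilt}, it suffices to show that, uniformly for $b_T+2\rho\le\lambda\le1-b_T-2\rho$,
\[
  I_{\lambda,T}\le e^{o(k)}\Bigl(\frac{32\pi\lambda(1-\lambda)}{k}\Bigr)^{k/2}.
\]
The reason is as follows. Lemma~\ref{l:best} applies with $\delta=b_T/2$ and gives $q_\lambda^k=e^{O(1)}(2\pi\lambda(1-\lambda)(k-1))^{-k/2}$. Then $q_\lambda^k(2\tanh T)^{-k}I_{\lambda,T}$ is at most $e^{o(k)}(2\tanh T)^{-k}\,4^k k^{-k}=e^{o(k)}(2/\tanh T)^kk^{-k}$. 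I would prove the bound on $I_{\lambda,T}$ by Laplace's method around the minimiser $a\one$ of Lemma~\ref{l:minimiser}, using Lemma~\ref{l:concentration} to discard the tails.

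\textbf{Step 1: the minimiser is interior with margin.} From $\sigma(2z(\lambda))=\lambda\ge\sigma(-2T)+2\rho$ and the boundedness of $\sigma'$, I get $z(\lambda)\ge -T+c\rho$ for some $c>0$ depending only on $T$, and symmetrically at the top. Since $\sigma(2a_\ast)=(d+2)/(k+3)=\lambda+O(1/k)$, we have $a_\ast=z(\lambda)+O(1/k)$. Because $\rho\gg 1/k$, this gives $a=a_\ast$, and the cube $a\one+[-\rho,\rho]^k$ lies inside $[-T,T]^k$. In particular $\nabla\varPhi_\lambda(a\one)=\zero$.

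\textbf{Step 2: value at the minimum.} The identity $\cosh^2 z(\lambda)=1/(4\lambda(1-\lambda))$ gives $e^{-kU(z(\lambda))}=(4\lambda(1-\lambda))^k$. Since $U'$ is bounded, $kU(a)=kU(z(\lambda))+O(1)$. Also $\psi_\lambda(2z(\lambda))=0$ with $\psi_\lambda'(2z(\lambda))=0$ and $\psi_\lambda''$ bounded, so $K\psi_\lambda(2a)=O(k^2\cdot k^{-2})=O(1)$. Hence
\[
  e^{-\varPhi_\lambda(a\one)}=e^{O(1)}(4\lambda(1-\lambda))^k .
\]

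\textbf{Step 3: the inner region $R=\{\max_i\abs{x_i-a}\le\frac12\rho\}$.} Here I would compare the Hessian at any point of $R$ with the Hessian at $a\one$ term by term. The Hessian is
\[
  \nabla^2\varPhi_\lambda(\xvec)=\operatorname{diag}(U''(x_i))+\sum_{i<j}g''(x_i+x_j)(e_i+e_j)(e_i+e_j)\trans.
\]
Since $\log\sech^2$ is Lipschitz and $U''$, $g''$ are bounded below on $[-T,T]$, each coefficient changes by a factor $1-O(\rho)$. Therefore $\nabla^2\varPhi_\lambda(\xvec)\succeq(1-O(\rho))H$, where
\[
  H=\nabla^2\varPhi_\lambda(a\one)=8\lambda'(1-\lambda')I+\lambda'(1-\lambda')\bigl((k-2)I+J\bigr),\qquad \lambda'=\sigma(2a)=\lambda+O(1/k).
\]
Because the gradient vanishes at $a\one$, integrating along segments gives $\varPhi_\lambda(a\one+\vvec)\ge\varPhi_\lambda(a\one)+\frac12(1-O(\rho))\vvec\trans H\vvec$ on $R$. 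Extending the resulting Gaussian integral to all of $\Reals^k$ yields
\[
  \int_R e^{-\varPhi_\lambda}\le e^{-\varPhi_\lambda(a\one)}\bigl(1-O(\rho)\bigr)^{-k/2}(2\pi)^{k/2}\det H^{-1/2}.
\]
Now $\det H=(\lambda(1-\lambda))^k(k+6)^{k-1}(2k+6)e^{O(1)}=e^{O(\log k)}(\lambda(1-\lambda)k)^k$, and $(1-O(\rho))^{-k/2}=e^{O(k\rho)}=e^{o(k)}$. Multiplying by Step~2 gives
\[
  (4\lambda(1-\lambda))^k\Bigl(\frac{2\pi}{\lambda(1-\lambda)k}\Bigr)^{k/2}e^{o(k)}=e^{o(k)}\Bigl(\frac{32\pi\lambda(1-\lambda)}{k}\Bigr)^{k/2}.
\]

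\textbf{Step 4: the outer region.} By the second claim of Lemma~\ref{l:concentration}, the complement of $R$ has $\mu_{\lambda,T}$-mass $o(1)$. Hence $I_{\lambda,T}=(1+o(1))\int_R e^{-\varPhi_\lambda}$, which finishes the proof.

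I expect the main obstacle to be Step~3. The difficulty is making the Hessian comparison uniform in $\lambda$ and checking that the accumulated error is only $e^{O(k\rho)}$, which is $e^{o(k)}$ precisely because $\rho\to0$. The margin in Step~1 is needed so that the gradient vanishes at $a\one$ and the region $R$ stays inside the cube. Both points rely on $\rho$ being large compared with $1/k$ yet $o(1)$, which the choice $\rho=12\log k/\sqrt{B_Tk}$ provides.
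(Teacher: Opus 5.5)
Your proof is correct and follows essentially the same route as the paper: you reduce via Lemmas~\ref{l:Pbounds} and~\ref{l:best} to bounding $I_{\lambda,T}$, discard the complement of a box of radius $O(\rho)$ using Lemma~\ref{l:concentration}, and bound what remains by a Gaussian integral with Hessian $\lambda(1-\lambda)((k+6)I+J)$. The only difference is technical: you expand about the exact minimiser $a\one$, where there is no linear term, and use a multiplicative Hessian comparison that costs $e^{O(k\rho)}$; the paper instead expands about $z(\lambda)\one$, keeps the linear term $(4\lambda-2)\one\trans\uvec$ (completing the square gives an $e^{O(1)}$ factor) and bounds the cubic remainder by $O(k^2\rho^3)$, and both errors are $e^{o(k)}$.
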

\begin{proof}
  Since $z(b_T)=-T$ and $z'(\lambda)\ge 2$, the bounds
  on $\lambda$ imply that the box $\varOmega_\lambda
  =\{\xvec\st\max_i\abs{x_i-z(\lambda)}\le\rho\}$
  lies within $[-T,T]^k$.
  The unconstrained optimum $a_\ast$ found in Lemma~\ref{l:minimiser} is $z(\lambda+O(k^{-1}))$,
  and since $z'(\lambda)$ is bounded (for fixed~$T$),
  the minimum is unconstrained (i.e., $a=a_\ast$)
  and $\abs{a-z(\lambda)}=O(k^{-1})$.
  Consequently, $\{\xvec\st\max_i\abs{x_i-a}\le\rho/2\}
  \subseteq \varOmega_\lambda$ and, by 
  Lemma~\ref{l:concentration},
  $\mu_{\lambda,T}(\varOmega_\lambda)=1-o(1)$.

  Now we estimate the integral of $e^{-\varPhi_\lambda}$
  over $\varOmega_\lambda$.
  We calculate
  \begin{gather*}
      U(z(\lambda))=-\log\bigl(4\lambda(1-\lambda)\bigr),\quad
      U'(z(\lambda))=4\lambda-2,\quad
      U''(z(\lambda))=8\lambda(1-\lambda), \\
      \psi_\lambda(2z(\lambda))=\psi'_\lambda(2z(\lambda))=0,
      \qquad\psi''_\lambda(2z(\lambda))=\lambda(1-\lambda),\\
      \nabla^2\varPhi_\lambda(z(\lambda)\one)
      =\lambda(1-\lambda)((k+6)I+J).
  \end{gather*}
  Writing $R(\uvec)$ for the third-order Taylor remainder,
  we have
  \[
     \varPhi_\lambda(z(\lambda)\one+\uvec)
     = -k\log\bigl(4\lambda(1-\lambda)\bigr)
      + (4\lambda-2)\one\trans \uvec
        + \dfrac12 \uvec\Trans3 Q_\lambda\uvec + R(\uvec),
  \]
  where $Q_\lambda=\lambda(1-\lambda)((k+6)I+J)$.
  The third derivatives of both $U$ and $\psi_\lambda$
  are uniformly bounded over~$\Reals$.
  Using $\abs{x+y}^3\le 4(\abs{x}^3+\abs{y}^3)$,
  we have $\abs{R(\uvec)}=O(k^2\rho^3)=o(k)$.

  Since we only need an upper bound, we can now expand
  the domain of the integral to $\Reals^k$ while defining
  $R(\uvec)=0$ outside $\varOmega_\lambda$.
  This gives
  \[
     \int_{\varOmega_\lambda} e^{-\varPhi_\lambda(\xvec)}
        \,d\xvec \le
     e^{o(k)}\bigl(4\lambda(1-\lambda)\bigr)^k (2\pi)^{k/2}
      (\det Q_\lambda)^{-1/2}
      \exp\bigl(2(1-2\lambda)^2\one\Trans3 Q_\lambda^{-1}\one\bigr).
  \]
   Since $\mu_{\lambda,T}(\varOmega_\lambda)=1-o(1)$,
   the integral over $[-T,T]^k$ is $1+o(1)$ times the
   integral over $\varOmega_\lambda$. This factor can
   be absorbed into the~$e^{o(k)}$.
  
  The eigenvalues of $(k+6)I+J$ are $2k+6$ once, and
  $k-1$ of $k+6$. This shows that
  $\exp\bigl(2(1-2\lambda)^2\one\Trans3 Q_\lambda^{-1}\one\bigr)=e^{O(1)}$
  and $\det Q_\lambda = 2\lambda^k(1-\lambda)^k(k+3)(k+6)^{k-1}$.
  Substituting these values and applying Lemmas~\ref{l:best}
  and~\ref{l:Pbounds} gives the required result.
\end{proof}

\begin{lemma}\label{l:secondpart}
Uniformly for $\abs{\lambda-b_T}\le 2\rho$
or $\abs{1-\lambda-b_T}\le 2\rho$ we have
\[
    \Pr(G[k]\text{~is $d$-regular}) \le e^{o(k)}
    \biggl(\frac{2}{\tanh T}\biggr)^k k^{-k}.
\]
\end{lemma}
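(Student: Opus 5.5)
We reduce to the lower range and then repeat the Laplace-method argument of Lemma~\ref{l:firstpart}, but centred at the actual (possibly boundary) minimiser $a\one$ of Lemma~\ref{l:minimiser} rather than at $z(\lambda)\one$.

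\emph{Reduction.} The model is symmetric under $\xvec\mapsto-\xvec$ together with complementation: $\sigma(-t)=1-\sigma(t)$, the truncated logistic density is even, and a $d$-regular graph complements to a $(k-1-d)$-regular one. So it suffices to treat $\abs{\lambda-b_T}\le2\rho$. By Lemma~\ref{l:Pbounds}, \eqref{Ilt} and Lemma~\ref{l:best}, exactly as at the end of Lemma~\ref{l:firstpart}, it is enough to show
\[
I_{\lambda,T}\le e^{o(k)}\bigl(4\lambda(1-\lambda)\bigr)^k(2\pi)^{k/2}(\det Q_\lambda)^{-1/2},
\qquad Q_\lambda=\lambda(1-\lambda)\bigl((k+6)I+J\bigr).
\]
The product with $q_\lambda^k(2\tanh T)^{-k}$ then gives $e^{o(k)}(2/\tanh T)^kk^{-k}$, since all $\lambda$-dependence cancels.

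\emph{Localisation and expansion.} Let $\varOmega'=\{\xvec\in[-T,T]^k\st\max_i\abs{x_i-a}\le\rho/2\}$. By Lemma~\ref{l:concentration}, $\mu_{\lambda,T}(\varOmega')=1-o(1)$, so $I_{\lambda,T}$ is $(1+o(1))$ times the integral over $\varOmega'$. Since $\psi_\lambda$ and $U$ are defined on all of $\Reals$, we may Taylor expand about $a\one$. Writing $\xvec=a\one+\uvec$,
\[
\varPhi_\lambda(a\one+\uvec)=\varPhi_\lambda(a\one)+c\,\one\trans\uvec+\tfrac12\uvec\Trans3 Q'\uvec+R(\uvec),
\]
where $c=2U'(a)/2+(k-1)(\sigma(2a)-\lambda)$ is the common coordinate of $\nabla\varPhi_\lambda(a\one)$, $Q'=\sigma'(2a)((k+6)I+J)$ (with $U''(a)=8\sigma'(2a)$ playing the role it did before), and $\abs{R}=O(k^2\rho^3)=o(k)$ by bounded third derivatives.

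\emph{Handling the linear term.} This is the main obstacle: near $b_T$ the minimiser may be clamped at $a=-T$. Then $c$ can be of order $k\rho$, and completing the square would cost $\exp(\tfrac12c^2\one\trans Q'^{-1}\one)$. That factor is not $e^{o(k)}$, so we cannot simply complete the square. Instead we use the sign information:
\begin{itemize}
\item If $a=a_\ast$ is interior, then $c=0$.
\item If $a=-T$, then convexity along $x\one$ forces $c\ge0$, and every $\uvec$ with $a\one+\uvec\in[-T,T]^k$ has $u_i\ge0$. Hence $c\,\one\trans\uvec\ge0$ on $\varOmega'$ and can be dropped in an upper bound.
\item The case $a=T$ cannot occur in this range.
\end{itemize}
After dropping the linear term (or noting it vanishes), extend the domain to $\Reals^k$ with $R=0$ outside $\varOmega'$. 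This gives
\[
\int_{\varOmega'}e^{-\varPhi_\lambda}\le e^{o(k)}e^{-\varPhi_\lambda(a\one)}(2\pi)^{k/2}(\det Q')^{-1/2}.
\]

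\emph{Comparing with the target.} We have $\varPhi_\lambda(a\one)=kU(a)+K\psi_\lambda(2a)\ge kU(a)$ because $\psi_\lambda\ge0$. Also $\abs{a-z(\lambda)}=O(\rho)$: both $a$ and $z(\lambda)$ lie within $O(\rho+1/k)$ of $-T$, since $z(b_T)=-T$, $z'$ is bounded near $b_T$, and $a_\ast=z(\lambda+O(1/k))$ is clamped to $[-T,T]$. Since $U'$ is bounded, $kU(a)\ge kU(z(\lambda))-O(k\rho)=-k\log(4\lambda(1-\lambda))-o(k)$. Similarly $\sigma'(2a)=\lambda(1-\lambda)e^{O(\rho)}$, so $\det Q'=e^{O(k\rho)}\det Q_\lambda=e^{o(k)}\det Q_\lambda$. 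Combining these gives the required bound on $I_{\lambda,T}$, uniformly in this range of $\lambda$.
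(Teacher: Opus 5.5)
Your proof is correct, but it takes a different route from the paper's. The paper does no localisation and no Taylor expansion. It applies the uniform curvature bound of Lemma~\ref{l:Hessian} on the whole cube to get $\varPhi_\lambda(\xvec)\ge\varPhi_\lambda(a\one)+\nabla\varPhi_\lambda(a\one)\trans(\xvec-a\one)+\frac12B_T(\xvec-a\one)\trans((k+6)I+J)(\xvec-a\one)$. It then drops the linear term by first-order optimality of $a$ on the box, which is your sign argument in one line. Finally it uses $\psi_\lambda\ge0$ and integrates a Gaussian with curvature $B_T$ over $\Reals^k$.

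That shortcut works only because, in this range, $B_T=b_T(1-b_T)=\lambda(1-\lambda)e^{O(\rho)}$. Replacing the true curvature $\lambda(1-\lambda)$ by its global lower bound therefore costs just $e^{O(k\rho)}=e^{o(k)}$. Away from the ends this loss would be exponential, which is why the paper needs the separate Lemma~\ref{l:firstpart}.

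You instead use Lemma~\ref{l:concentration} to restrict to the $\rho/2$-box around $a\one$. There you have the exact Hessian $\sigma'(2a)((k+6)I+J)$ and a cubic remainder $O(k^2\rho^3)=o(k)$. This costs extra machinery: the concentration lemma and the log-concave tail bound behind it. In return it does not depend on $B_T\approx\lambda(1-\lambda)$. Run over the whole range $b_T-2\rho\le\lambda\le1-b_T+2\rho$ (where $a=a_\ast$ and $c=0$ except near the ends), the same argument also proves Lemma~\ref{l:firstpart}, with no completing-the-square step.
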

\begin{proof}
  We give the argument for $\abs{\lambda-b_T}\le 2\rho$
  since the other range is the same on replacing $\xvec$
  by $-\xvec$ and $\lambda$ by $1-\lambda$.

  Let $a_\ast,a$ be as in Lemma~\ref{l:minimiser}.
  Since $z(b_T)=-T$, we have $\abs{a_\ast+T}\le
  \abs{a_\ast-z(\lambda)}+\abs{z(\lambda)-z(b_T)}$.
  From the definition of $a_\ast$, we know
  $a_\ast-z(\lambda)=O(k^{-1})$. Moreover,
  the boundedness of $z'(\lambda)$ implies that
  $z(\lambda)-z(b_T)=O(\abs{\lambda-b_T})=O(\rho)$.
  Thus, $\abs{a_\ast + T}=O(\rho)$. Note that this
  is also true for $\abs{a+T}$ if $a_\ast$ lies
  below~$-T$.

  By Lemma~\ref{l:Hessian}, 
  \[
     \varPhi_\lambda(\xvec) \ge \varPhi_\lambda(a\one)
     + \nabla\varPhi_\lambda(a\one)\trans(\xvec-a\one)
     + \dfrac12 B_T(\xvec-a\one)\trans((k+6)I+J)(\xvec-a\one),
  \]
  and note that we can drop the second term on the right as
  it is nonnegative by the optimality of~$a$.

  Since $\psi_\lambda\ge 0$, $\varPhi_\lambda(a\one)\ge kU(a)$.
  Consequently,
\[
  \int_{[-T,T]^k} e^{-\varPhi_\lambda(\xvec)}\,d\xvec
  \le e^{-kU(a)} (2\pi)^{k/2}
  B_T^{-k/2}
  \bigl(2(k+3)(k+6)^{k-1}\bigr)^{-1/2}.
\]
We have $B_T=\frac14\sech^2 T$ and, from $a=-T+O(\rho)$,
$e^{-U(a)}=\sech^2 a = (1+o(1))\sech^2 T$.
Also, from $b_T=\lambda+O(\rho)$ we have
$B_T=(1+o(1))\lambda(1-\lambda)$.
Applying Lemmas~\ref{l:Pbounds} and~\ref{l:best}, now
establishes the lemma.
\end{proof}

\begin{lemma}\label{l:thirdpart}
  Uniformly for $\lambda\le b_T-2\rho$ or
  $1-\lambda\le b_T-2\rho$,
  \[
     \Pr(G[k]\text{~is $d$-regular}) \le e^{-\Omega(k\log^2 k)}.
  \]
\end{lemma}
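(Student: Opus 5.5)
By the symmetry $\xvec\mapsto-\xvec$, $\lambda\mapsto1-\lambda$ (as in the proof of Lemma~\ref{l:secondpart}), it suffices to treat $\lambda\le b_T-2\rho$. The idea is that every edge probability satisfies $p_{ij}=\sigma(X_i+X_j)\ge\sigma(-2T)=b_T$, whatever $\Xvec$ is. Conditional on $\Xvec$, the edge count of $G[k]$ is therefore a sum of $K=\binom k2$ independent Bernoulli variables, and its mean is at least $b_TK$. On the other hand, $d$-regularity forces exactly $m=dk/2=\lambda K$ edges. So we need the edge count to fall below its mean by at least $(b_T-\lambda)K\ge 2\rho K$. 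This is a large-deviation event, and we bound it by a Chernoff/Hoeffding estimate. The bound is uniform in $\Xvec$, so the conditioning can then be removed.

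The steps are as follows. First, couple the edge indicators with independent Bernoulli$(b_T)$ variables, or simply apply Hoeffding's inequality directly to the independent (non-identical) summands. Conditional on $\Xvec$, this gives
\[
 \Pr\bigl(e(G[k])\le \lambda K \mid \Xvec\bigr)\le \exp\bigl(-2(2\rho K)^2/K\bigr)=\exp(-8\rho^2K).
\]
Next, substitute $\rho^2=144\log^2k/(B_Tk)$ and $K\sim k^2/2$. The exponent becomes $\Theta(k\log^2 k)$, because $B_T$ is a fixed positive constant. This gives $\Pr(G[k]\text{ is $d$-regular}\mid\Xvec)\le e^{-\Omega(k\log^2k)}$ uniformly in $\Xvec$ and in $\lambda$ within the range. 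Finally, take the expectation over $\Xvec$. The cases $d=0$ (where $\lambda=0$) and small $d$ are covered by the same argument, since they only increase the required deviation.

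There is no real obstacle here. The only point needing care is to confirm that the deviation is linear in $\rho K$ rather than something smaller, so that the squared deviation divided by $K$ is of order $\rho^2k^2$. This holds because the lower bound $p_{ij}\ge b_T$ is deterministic on $[-T,T]^k$. One could sharpen the estimate using the relative-entropy (Chernoff) form, but Hoeffding already gives the stated $e^{-\Omega(k\log^2k)}$. This is more than enough to be negligible against the $(ne/k)^k$ factor in~\eqref{union} once $n=O(k^2)$.
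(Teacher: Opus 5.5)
Your proposal is correct and matches the paper's proof essentially step for step. Both use the deterministic lower bound $p_{ij}\ge b_T$ and the fact that $d$-regularity forces exactly $\lambda K$ edges. Both then apply a Hoeffding/McDiarmid bound to get $e^{-8K\rho^2}$ uniformly in $\Xvec$, remove the conditioning, and substitute $\rho$ to obtain $e^{-\Omega(k\log^2 k)}$.
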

\begin{proof}
  The two ranges are equivalent, so we consider just the first.
  Conditional on $\Xvec$, the edges of $G[k]$ are independent
  and have probability at least~$b_T$.
  To be $d$-regular, $G[k]$ must have $\lambda K$ edges.
  Therefore, by McDiarmid's inequality~\cite{McD},
  \[
     \Pr(G[k]\text{~is $d$-regular}\mid \Xvec=\xvec)
     \le \exp\bigl(-2K(b_T-\lambda)^2\bigr) \le e^{-8K\rho^2}.
  \]
  Since this bound is independent of $\xvec$, it holds
  unconditionally.
  Now substituting the value of $\rho$ proves the lemma.
\end{proof}

\begin{proof}[Proof of Theorem~\ref{t:improved}]
Lemmas~\ref{l:firstpart}--\ref{l:thirdpart} cover all ranges
of degrees. Thus we have
\[
    \Pr(G[k]\text{~is $d$-regular}) \le
    \biggl(\frac{2(1+\eta_T(k))}{k\tanh T}\biggr)^k,
\]
for every fixed $T$, where $\eta_T(k)\to 0$
as $k\to\infty$ for any fixed~$T$.
Since there are $k$ possible degrees,
$\binom nk\le\bigl(\dfrac{ne}{k}\bigr)^k$,
and $ke^{o(k)}=e^{o(k)}$,
the union bound gives
\begin{equation}\label{gotit}
    \Pr(G\text{~has an induced regular subgraph of order $k$})
    \le 
    \biggl(\frac{2(1+\eta_T(k))en}{k^2\tanh T}\biggr)^k.
\end{equation}
Given fixed $\eps\in(0,\frac12)$, choose $T$ such that
$\tanh T= 1-\eps$ and $k_0$ sufficiently large that
$k_0\ge \log\eps/\log(1-\eps)$
and $(1+\eta_T(\ell))(1-2\eps)\le(1-\eps)^2$ for all $\ell\ge k_0$.
Set 
\[
   n = \biggl\lfloor\frac{(1-2\eps)k_0^2}{2e}\biggr\rfloor.
\]
Then, applying~\eqref{gotit} for $k=\ell$,
we have for $k_0\le\ell\le n$,
\[
    \Pr(G\text{~has an induced regular subgraph of order $\ell$})
    \le 
    (1-\eps)^\ell.
\]
Summing over $k_0\le\ell\le n$, the probability that $G$
has a regular induced subgraph of order at least $k_0$
is less than~1, so some graph exists without such subgraphs.
Because $\eps$ was chosen arbitrarily,
this is equivalent to the statement of the theorem.
\end{proof}

\section{Explicit constructions}\label{s:explicit}

In this section we note some constructions that
show quadratic lower bounds on $\Neq k$ for some
values of~$k$.
Fajtlowicz et al.\ gave the first example, noting
that the disjoint union of $p-1$ cliques of order
$p-1$ has no induced regular subgraph of order~$p$
if $p$ is prime~\cite{FMRS}.
The lexicographic product $C_{2p-1}[K_{(p-1)/2}]$
has the same property with
$(p-1)/2$ additional vertices, but by using
the union of disjoint lexicographic products we
can do even better.

\begin{lemma}\label{lem-p}
  Consider the lexicographic product $C_r[K_s]$ for $r\ge 4$.
  The connected induced regular subgraphs of degree $d$ are:
  \begin{itemize}\itemsep=0pt
      \item[(i)] A clique of order $d+1$, for $0\le d\le 2s-1$.
      \item[(ii)] A subgraph of order $r(d+1)/3$
         if that is
         an integer, for $2\le d\le 3s-1$,
         which meets every $K_s$-block.
  \end{itemize}
\end{lemma}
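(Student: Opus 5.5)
Label the blocks $B_0,\dots,B_{r-1}$ cyclically, so two vertices are adjacent iff they lie in the same block or in cyclically consecutive blocks. Let $S$ be the vertex set of a connected induced $d$-regular subgraph and $n_i=\abs{S\cap B_i}$. Every vertex of $S\cap B_i$ has degree $(n_{i-1}+n_i+n_{i+1})-1$ in $G[S]$, since it is adjacent to all of $S$ in its own block and the two neighbouring blocks. So regularity is equivalent to the condition
\[
  n_{i-1}+n_i+n_{i+1}=d+1 \quad\text{whenever } n_i>0 .
\]
The whole proof is an analysis of which nonnegative vectors $(n_0,\dots,n_{r-1})$ with $0\le n_i\le s$ satisfy this and give a connected subgraph.

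\textbf{Case A: some block misses $S$.} Then the support of $(n_i)$ is a union of cyclic intervals. Connectivity forces a single interval of consecutive nonempty blocks, say $n_j,\dots,n_{j+\ell-1}$, with the blocks just outside it empty. The interval cannot have length $\ell\ge3$. The equation at the first block gives $n_j+n_{j+1}=d+1$, and the equation at the second block gives $n_j+n_{j+1}+n_{j+2}=d+1$, which forces $n_{j+2}=0$, a contradiction. So $\ell\le2$ and $S$ lies in at most two consecutive blocks, whose union induces the clique $K_{2s}$. Thus $S$ is a clique of order $d+1\le2s$, giving (i). Conversely, every such clique is realised by choosing vertices in one or two adjacent blocks. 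Here $r\ge4$ is used to make sure two adjacent blocks plus an empty block form a genuine path segment, and that the "two outside neighbours" of an interval are handled correctly (for $r=3$ the whole graph is complete).

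\textbf{Case B: $S$ meets every block.} Then the equation holds for all $i$. Subtracting the equations at $i$ and $i+1$ gives $n_{i-1}=n_{i+2}$, so $(n_i)$ is periodic with period $3$, and also with period $r$. If $3\nmid r$, then $(n_i)$ is constant, say $n_i=c$, so $d+1=3c$ and the order is $rc=r(d+1)/3$. If $3\mid r$, the values are $x,y,z$ repeating with $x+y+z=d+1$, and the order is $(r/3)(x+y+z)=r(d+1)/3$ again. In both cases $r(d+1)/3$ is automatically an integer.

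For the range of $d$: each $n_i\le s$ gives $d+1\le3s$, so $d\le3s-1$. For the lower end, $d\ge2$ because every $n_i\ge1$ and so $d+1\ge3$. In this case $S$ meets every block and the subgraph is connected, being a cycle of nonempty blocks. Conversely, constant $n_i=c$ works when $3\mid d+1$, and the $x,y,z$ patterns work when $3\mid r$. This shows existence exactly when $r(d+1)/3$ is an integer (with $d+1\equiv0 \pmod 3$ or $3\mid r$) and $d$ is in range, giving (ii).

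The main obstacle I expect is the bookkeeping in Case A. The step that needs care is showing that an interval of nonempty blocks of length $\ge3$, including the wrap-around intervals of length $r-1$, is impossible. The inward propagation argument above applies at each end of the interval and needs $r\ge4$ to keep the endpoint equations distinct from one another.
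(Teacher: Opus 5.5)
Your proof is correct and takes essentially the same route as the paper. The paper likewise rules out an empty block followed by three nonempty ones by comparing degrees in the first two nonempty blocks, and when every block is met it derives $x_i=x_{i+3}$ from the equations $x_i+x_{i+1}+x_{i+2}=d+1$. You also check connectivity and the converse constructions explicitly, which the paper leaves implicit. Your remark about where $r\ge4$ is needed is harmless but not quite accurate: in Case~A with $\ell\ge3$, the existence of an empty block already forces $r\ge4$.
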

\begin{proof}
  Let $H$ be a connected induced regular subgraph.
   Let $B_0,\ldots,B_{r-1}$ be the copies of $K_s$ in
   cyclic order (subscripts modulo $r$) and define $x_i=\Abs{V(H)\cap B_i}$ for each $i$.

  Suppose that for some numbering,
  $x_0=0$, $x_1>0$, $x_2>0$ and $x_3>0$.
  Comparing the degrees of the vertices of $H$
  lying in $B_1$ and $B_2$ we have
  $x_1-1+x_2=x_1+x_2-1+x_3$, which implies
  $x_3=0$, a contradiction.

  Therefore, either $H$ lies within $B_i$ or
  $B_i\cup B_{i+1}$ for some $i$, in which case
  it is a clique, or else $x_i>0$ for all~$i$.

  In the latter case, the regularity of $H$
  implies that $x_i+x_{i+1}+x_{i+2}=d+1$ for all~$i$.
  Subtracting from this the equation
  $x_{i+1}+x_{i+2}+x_{i+3}=d+1$
  gives $x_i=x_{i+3}$, so
  $x_0,\ldots,x_{r-1}$ is periodic of period~3.
  If $r$ is a multiple of 3, we can choose
  $x_0,x_1,x_2$ to obtain any degree in $[2,3s-1]$.
  If $r$ is not a multiple of 3, then
  $x_0=x_1=\cdots=x_{r-1}$, which gives all
  degrees $d\in[2,3s-1]$ such that $d+1$ is a
  multiple of~3.
  In both cases, $H$ has $r(d+1)/3$ vertices.
\end{proof}

\begin{thm}\label{thm-p}
  Let $p\ge 5$ be prime. Then the following graph $G_p$
  has no induced regular subgraph of order~$p$.
  \begin{itemize}\itemsep=0pt
      \item[(a)] If $p=12t+1$, then
        $G_p=3t\, C_9[K_{6t}]$, which has $\frac98(p-1)^2$
        vertices.
      \item[(b)] If $p=12t+5$, then
        $G_p=(3t+1) C_9[K_{6t+2}]$, which has
        $\frac98(p-1)^2$ vertices.
      \item[(c)] If $p=12t+7$, then
        $G_p=C_5[K_{6t+3}]\sqcup (3t+1)C_9[K_{6t+3}]$,
        which has $\frac18(p-1)(9p-7)$ vertices.
      \item[(d)] If $p=12t+11$, then
        $G_p=C_4[K_{6t+5}]\sqcup (3t+2)C_9[K_{6t+5}]$,
        which has $\frac18(p-1)(9p-11)$ vertices.
  \end{itemize}
\end{thm}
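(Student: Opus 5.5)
The plan is to reduce to the connected case and then use divisibility. An induced regular subgraph $H$ of $G_p$ of degree $d$ is a disjoint union of connected induced $d$-regular subgraphs, and each of these lies inside a single component $C_r[K_s]$ of $G_p$. Since every component has $r\in\{4,5,9\}\ge 4$, Lemma~\ref{lem-p} lists the possible pieces. Each piece is either a clique of order $d+1$ (with $d\le 2s-1$) or a block-meeting subgraph of order $r(d+1)/3$. For $r=9$ the latter has order $3(d+1)$. For $r=5$ or $r=4$ it requires $3\mid d+1$ and has order $5(d+1)/3$ or $4(d+1)/3$. Moreover, at most one component of $G_p$ is a $C_5[K_s]$ or $C_4[K_s]$, so at most one piece of this last kind occurs.

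\textbf{Case 1: no $C_5$/$C_4$-type piece.} Every piece has order divisible by $d+1$, so $d+1$ divides $\abs{V(H)}=p$. Since $p$ is prime, either $d+1=p$ or $d=0$.
\begin{itemize}
\item If $d+1=p$, a single piece must have order $p$. A piece of order $3(d+1)$ is too large, so $H$ would be a clique $K_p$, which needs $p\le 2s$. But $2s$ equals $12t$, $12t+4$, $12t+6$ and $12t+10$ in cases (a)--(d), and each of these is less than~$p$.
\item If $d=0$, then $H$ is an independent set of order~$p$. The independence number of $C_r[K_s]$ is $\lfloor r/2\rfloor$, so $\alpha(G_p)$ equals $12t$, $12t+4$, $2+4(3t+1)=12t+6$ and $2+4(3t+2)=12t+10$ respectively. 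Each of these is again less than~$p$.
\end{itemize}

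\textbf{Case 2: exactly one $C_5$/$C_4$-type piece.} This only arises in (c) or (d). Write $d+1=3m$. The exceptional piece has order $5m$ or $4m$, and every other piece has order $3m$ or $9m$. Hence $m\mid p$.
\begin{itemize}
\item If $m=p$, the exceptional piece alone has order greater than $p$, which is impossible.
\item If $m=1$, then $d=2$, and we need $p=5+3j$ in (c) or $p=4+3j$ in (d). But $12t+2\equiv 2$ and $12t+7\equiv 1 \pmod 3$, so neither equation has an integer solution~$j$.
\end{itemize}

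Finally, I would verify the vertex counts directly, for example $3t\cdot 9\cdot 6t=\frac98(p-1)^2$ in case (a). The cases with $d=1$ are already covered by the divisibility argument, since $2\nmid p$. The main thing to check carefully is that the ranges of $d$ in Lemma~\ref{lem-p} impose no further cases, in particular that the clique bound $d\le 2s-1$ is exactly what rules out $K_p$. The remaining arguments are the arithmetic congruences above, which I expect to be routine.
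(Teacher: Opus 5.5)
Your proof is correct and follows essentially the paper's argument: you split $H$ into connected pieces and use Lemma~\ref{lem-p} to force $d+1\mid p$, or $m\mid p$ when the single $C_5$/$C_4$ block-meeting piece is present (it is unique because it meets every clique block of its component, so nothing else fits there). You then dispose of the trivial divisors with the clique and independence numbers, which the paper does up front rather than at the end, and your only slip is that in the $m=1$ case you mean $12t+11\equiv 2\pmod 3$, not $12t+2$.
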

\begin{proof}
  In all cases, the independence number and clique
  number of $G_p$ are less than $p$, so an induced
  regular subgraph $H$ of order $p$ and degree $d$ has
  $1\le d \le p-2$.

  By Lemma~\ref{lem-p}, the connected induced
  regular subgraphs of degree $d$ of $C_9[K_s]$
  have order divisible by $d+1$, so in cases
  (a) and (b) $d+1$ must be a divisor of $p$,
  which is impossible as $p$ is prime.

  By the same reasoning, in cases (c) and (d),
  an induced regular subgraph
  of $G_p$ must use a non-clique subgraph $J$ of the
  first component (of which only one is possible).
  
  For case (c), Lemma~\ref{lem-p} says that the order of $J$
  is $\frac53(d+1)$, which is an integer if $d=3m-1$ for
  some integer $m$. But the other components of $H$ have
  order divisible by $d+1$, so we need $p=(5+3q)m$ for
  some integer $q$, which has no solutions when
  $p$ is congruent to 1 modulo~3.

  The same argument for case (d) leads to $p=(4+3q)m$,
  which has no solutions when $p$ is congruent to
  2 modulo~3.
\end{proof}

Although we won't prove it, we believe the graphs in
Theorem~\ref{thm-p} are optimal for $p\ge 13$ within
the class of disjoint unions of lexicographic
products of cycles and cliques.
For $p=7$ and $p=11$, there are better solutions:
$3C_5[K_3]$ with 45 vertices for $p=7$ and
$2C_7[K_5]\sqcup C_9[K_5]$ with 115 vertices for
$p=11$.

\begin{thm}\label{thm-qp}
   If $q<p$ are primes, then
   $\Neq{qp} \ge p^2 + 2q^2p - 4qp + 2
   + (p-1)\min\{q-1,p-q\}$.
\end{thm}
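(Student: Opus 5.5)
The bound should come from an explicit graph on $N-1$ vertices, where $N=p^2+2q^2p-4qp+2+(p-1)\min\{q-1,p-q\}$, that has no induced regular subgraph of order $qp$. I would build it as a disjoint union of cliques plus a small number of special components. The basic observation is the one behind the Fajtlowicz et al.\ example. In a disjoint union of cliques, a $d$-regular induced subgraph is a disjoint union of copies of $K_{d+1}$, so its order is a multiple of $d+1$. Hence an induced regular subgraph of order $qp$ must have $d+1\in\{1,q,p,qp\}$. So it suffices to control four quantities:
\begin{itemize}
\item the number of components (to exclude $d=0$, we need fewer than $qp$ of them);
\item the number of cliques of order at least $q$ (to exclude $d+1=q$, we need fewer than $p$);
\item the number of cliques of order at least $p$ (to exclude $d+1=p$, we need fewer than $q$);
\item the clique number (to exclude $d+1=qp$, it must be less than $qp$).
\end{itemize}

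The base construction is greedy. Take $q-1$ copies of $K_{qp-1}$, then $p-q$ copies of $K_{p-1}$, and fill the remaining $qp-1-(p-1)=qp-p$ component slots with copies of $K_{q-1}$. The total order is
\[(q-1)(qp-1)+(p-q)(p-1)+(qp-p)(q-1)=p^2+2q^2p-4qp+1.\]
This already gives $\Neq{qp}\ge p^2+2q^2p-4qp+2$, which is the stated bound without the last term.

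The extra $(p-1)\min\{q-1,p-q\}$ has to come from replacing pairs of components by a single richer component. The natural pairing is one $K_{qp-1}$ with one $K_{p-1}$, and the number of such pairs is $\min\{q-1,p-q\}$. For each pair I would look for a connected gadget on $(qp-1)+(p-1)+(p-1)$ vertices with the following properties:
\begin{itemize}
\item clique number less than $qp$ and independence number at most $2$;
\item every connected induced $d$-regular subgraph has order either $d+1$ (a clique) or a number that cannot combine with clique pieces to total $qp$ when $d+1\in\{1,q,p,qp\}$ is excluded.
\end{itemize}
The model for this is Lemma~\ref{lem-p}: in $C_r[K_s]$ the non-clique regular subgraphs have order $r(d+1)/3$. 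I would first try gadgets of the form $C_4[K_s]$ or $C_5[K_s]$ with unequal blocks, or two large cliques joined along a $K_{p-1}$. Then I would redo the case analysis of Theorem~\ref{thm-p}, using the primality of $p$ and $q$ to rule out orders $qp$ that mix one non-clique piece with clique pieces.

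The counting side is then routine. I would check that the component count stays below $qp$, that the number of pieces able to host a $K_q$ or a $K_p$ stays below $p$ and $q$ respectively, and that the total vertex count rises by exactly $p-1$ per gadget.

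The main obstacle is designing the gadget and verifying it. Each merged component must gain $p-1$ vertices without creating a non-clique regular induced subgraph whose order, together with disjoint clique parts of the same degree, sums to $qp$. This is where the primality of $p$ and $q$ and the restriction $\min\{q-1,p-q\}$ must enter, and I expect most of the work to be a divisibility case analysis analogous to cases (c) and (d) of Theorem~\ref{thm-p}.
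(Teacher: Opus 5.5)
\textbf{There is a genuine gap: as written, the proposal proves only $\Neq{qp}\ge p^2+2q^2p-4qp+2$.} Your reduction is correct: in a union of cliques, $d+1$ must divide $qp$, so $d+1\in\{1,q,p,qp\}$. Your base graph is also correct, and it is exactly the paper's graph with the extra cliques deleted. But the whole content of the term $(p-1)\min\{q-1,p-q\}$ is the gadget, and you leave it undesigned.

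None of your candidates is pinned down, and the obvious readings fail:
\begin{itemize}
\item Joining a new $K_{p-1}$ to all of a $K_{qp-1}$ creates a $K_{qp}$, so the join must be partial.
\item A cycle of cliques such as $C_4[\cdot]$ or $C_5[\cdot]$ contains an induced $C_4$ or $C_5$, taking one vertex per block. That is a $2$-regular non-clique, and disjoint triangles can complete it to order $qp$. For example, with $(q,p)=(2,5)$ the untouched $A_i$'s host triangles, giving an induced $C_4\sqcup 2K_3$ of order $10$.
\end{itemize}
So the divisibility analysis you anticipate is not routine and can fail.

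The missing idea is to use a \emph{path} of cliques, whose end blocks rule out every non-clique regular subgraph outright. For each $i\le t=\min\{q-1,p-q\}$, the paper does the following:
\begin{itemize}
\item Split $B_i=K_{qp-1}$ into $C_i\cup D_i$ with $\abs{C_i}=qp-p$ and $\abs{D_i}=p-1$.
\item Add a clique $X_i$ of order $p-1$, joined to all of $C_i$ and all of $A_i$.
\end{itemize}
The gadget is then the chain $D_i,C_i,X_i,A_i$ with consecutive blocks completely joined.

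Take a connected induced regular $H$ inside it:
\begin{itemize}
\item If $H$ meets $D_i,C_i,X_i$, its vertices in $C_i$ have larger $H$-degree than those in $D_i$.
\item If $H$ meets $C_i,X_i,A_i$, its vertices in $X_i$ have larger $H$-degree than those in $A_i$.
\item Otherwise $H$ lies in two consecutive blocks and is a clique.
\end{itemize}
So every induced regular subgraph is still a union of pairwise non-adjacent copies of $K_{d+1}$. Primality is used only through the divisors of $qp$; no analogue of cases (c) and (d) of Theorem~\ref{thm-p} is needed.

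Your counting then finishes the proof, provided you count packings per component rather than ``pieces able to host'' a clique. The block sizes are forced:
\begin{itemize}
\item $\abs{C_i}\le qp-p$ keeps $C_i\cup X_i$ below $K_{qp}$.
\item $\abs{D_i}\le p-1$ prevents a $K_p$ in $D_i$ that is non-adjacent to a $K_p$ in $X_i\cup A_i$.
\end{itemize}
Each gadget then has clique number $qp-1$ and independence number $2$. It packs at most two pairwise non-adjacent $K_q$'s and at most one $K_p$. This is exactly what the pair $B_i,A_i$ contributed before, so the totals $qp-1$, $p-1$ and $q-1$ are unchanged.
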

\begin{proof}
  Let $t=\min\{q-1,p-q\}$.
  Construct a graph $G$ as follows.
  Take disjoint cliques $B_1,\ldots,B_{q-1}$
  of order $qp-1$, $A_1,\ldots,A_{p-q}$ of order
  $p-1$, $X_1,\ldots,X_t$ of order $p-1$, 
  and $Y_1,\ldots,Y_{qp-p}$ of order $q-1$.
  For $1\le i\le t$, partition $B_i$ into
  $C_i\cup D_i$ where $\abs{C_i}=qp-p$.
  Finally, for $1\le i\le t$ join all of
  $X_i$ to all of $C_i\cup A_i$.

  Suppose an induced regular subgraph $H$ of order $pq$
  is a union of cliques. For each of the
  possibilities $pqK_1$, $qK_p$, $pK_q$ and $K_{pq}$,
  $G$ does not have enough non-adjacent cliques of
  that size.
  Otherwise, $H$ must have a non-clique component which either
  includes vertices from each of $D_i,C_i,X_i$ for some~$i$
  (which is impossible since the vertices in $C_i$
  would have higher degree in $H$ than those in $D_i$),
  or includes vertices from each of $C_i,X_i,A_i$
  for some $i$, which is similarly impossible.
\end{proof}

The construction in Theorem~\ref{thm-qp} does not work
for $q=4$ as the graph has $2pK_2$ as an induced
subgraph. However, we can achieve slightly less.

\begin{thm}\label{thm-4p}
  For prime $p\ge 7$, $\Neq{4p} \ge p^2+11p-1$.
\end{thm}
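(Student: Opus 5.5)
The plan is to adapt the construction in the proof of Theorem~\ref{thm-qp} to $q=4$. A regular induced subgraph $H$ of order $4p$ and degree $d$ is a disjoint union of components. We ensure that every non-clique component is impossible, exactly as before. Then the clique components force $H=(4p/s)K_s$ for a divisor $s$ of $4p$, so $s\in\{1,2,4,p,2p,4p\}$. When $q$ is prime only the four divisors $1,q,p,qp$ arose. Now $2$ and $2p$ are divisors as well, and these produce the obstructions $2pK_2$ and $2K_{2p}$. The construction of Theorem~\ref{thm-qp} with $q=4$ contains both: the $Y_j$ supply $2pK_2$, and two of the cliques $B_i$ supply $2K_{2p}$. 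So the graph must be rebuilt with these two extra constraints in mind.

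The graph will again be a disjoint union of cliques, together with a few gadgets in which a clique $X_i$ is joined to part $C_i$ of a larger clique $B_i$ and to a clique $A_i$. I will fix the shape and leave the sizes as parameters. There is one clique $B_1$ of order $4p-1$, and any further large cliques have order less than $2p$. There are cliques $A_j$ and $X_i$ of order $p-1$, and some cliques $Y_j$ of order $3$ or fewer. The constraints to record, each measured as a maximum over families of pairwise non-adjacent cliques in $G$, are as follows.
\begin{enumerate}
\item[(1)] There is no clique of order $4p$.
\item[(2)] At most one pairwise non-adjacent clique has order at least $2p$.
\item[(3)] At most three pairwise non-adjacent cliques have order at least $p$.
\item[(4)] At most $p-1$ pairwise non-adjacent cliques have order at least $4$.
\item[(5)] At most $2p-1$ pairwise non-adjacent cliques have order at least $2$.
\item[(6)] The independence number is less than $4p$.
\end{enumerate}
In a gadget block $B_i\cup X_i\cup A_i$ one can take $D_i$ and $X_i$ together, but not $X_i$ and $A_i$. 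Hence each block contributes $2$ to the counts in (4) and (5), and this has to be counted carefully. The sizes will then be chosen to maximise the total number of vertices subject to (1)--(6), aiming at exactly $p^2+11p-2$ vertices. That count shows $\Neq{4p}\ge p^2+11p-1$.

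The non-clique components are handled as in Theorem~\ref{thm-qp}. Such a component must contain vertices of $X_i$ and of at least two of $C_i$, $D_i$, $A_i$, since every other part of $G$ is a disjoint union of cliques. Vertices of $C_i$ have neighbours in $X_i$, while vertices of $D_i$ do not, and both see the same vertices of $B_i\cap V(H)$. So if $H$ meets $D_i$, $C_i$ and $X_i$, the degrees in $H$ differ, a contradiction. The same argument rules out a component meeting $C_i$, $X_i$ and $A_i$. What remains must be checked by hand: a component contained in $X_i\cup C_i$, or in $X_i\cup A_i$, is a clique, and the case of a component meeting $X_i$, $D_i$ and $A_i$ but not $C_i$ must be excluded. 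I will check whether the hypothesis $p\ge7$ is needed for the gadget sizes or only for the counting.

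The main obstacle is the bookkeeping: choosing the parameters so that (2) and (5) hold without losing too much. Here (2) forces most large cliques below order $2p$, and (5) caps the total number of cliques of order at least $2$. A naive choice, such as $B_1,B_2,B_3$ of order $4p-1$, $p-4$ cliques $A_j$, three gadgets, and about $p$ triangles $Y_j$, violates (2). Shrinking $B_2$ and $B_3$ to order $2p-1$ and re-balancing the numbers of $A_j$ and $Y_j$ is what I expect to produce the count $p^2+11p-2$. I would first tabulate the vertex total as a function of the number of gadgets and the numbers of $A_j$ and $Y_j$. I would then verify (1)--(6) at the optimum, keeping in mind that adjacency inside gadgets changes what counts as non-adjacent cliques.
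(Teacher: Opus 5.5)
Your reduction is sound and is the same as the paper's. You reduce to $H=(4p/s)K_s$ with $s\in\{1,2,4,p,2p,4p\}$. You rule out non-clique components by comparing degrees inside each gadget. Then you bound the number of pairwise non-adjacent cliques of each order, which is your list (1)--(6). However, the theorem is an existence statement, and its proof consists of exhibiting one specific graph on $p^2+11p-2$ vertices and checking the counts. Your proposal stops before doing either. You leave open the number of cliques $A_j$, the numbers of triangles and isolated vertices, and the split of $B_i$ into $C_i$ and $D_i$. The vertex total is something you ``expect'' rather than compute. Every bound in (1)--(6) depends on exactly these parameters, so none of them has actually been verified, and as written nothing is proved. (You also do not need to maximise anything: one graph of the right order suffices.)

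Here is the graph the paper uses, in your notation; it is where your re-balancing points:
\begin{itemize}
\item $B_1$ of order $4p-1$, and $B_2,B_3$ of order $2p-1$;
\item $p-4$ cliques $A_j$ of order $p-1$;
\item $p$ triangles and $2p$ isolated vertices;
\item cliques $X_1,X_2,X_3$ of order $p-1$, where $X_i$ is joined to all of $A_i$ and to $C_i\subseteq B_i$, with $|C_1|=3p$ and $|C_2|=|C_3|=p$.
\end{itemize}

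These splits are forced. A larger $C_i$ makes $X_i\cup C_i$ either a $K_{4p}$ (for $i=1$) or a second non-adjacent $K_{2p}$ (for $i=2,3$). If $|D_i|\ge p$, one block would contain two non-adjacent $K_p$'s, one in $D_i$ and one in $X_i\cup A_i$, giving $4K_p$ overall.

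With these sizes, every $K_p$ in a block meets the clique $C_i\cup X_i$, so each block holds at most one of any family of non-adjacent $K_p$'s. The counts are then:
\begin{itemize}
\item (6): at most $2p+p+(p-7)+6=4p-1$;
\item (5): at most $p+(p-7)+6=2p-1$;
\item (4): at most $(p-7)+6=p-1$;
\item (3): at most $3$;
\item (2): only the first block contains a $K_{2p}$;
\item (1): the clique number is $4p-1$.
\end{itemize}
The vertex total is $(4p-1)+2(2p-1)+(p-4)(p-1)+3p+3(p-1)+2p=p^2+11p-2$.

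This also answers your question about $p\ge 7$. It is needed so that three of the $p-4$ cliques of order $p-1$ are available as the $A_i$ of the gadgets, leaving $p-7\ge0$ free ones. Finally, the case of a component meeting $X_i$, $D_i$ and $A_i$ but not $C_i$ needs no separate check. Vertices of $D_i$ have no neighbours outside $B_i$, so such a component is disconnected.
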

\begin{proof}
  Construct a graph $G$ as follows.  Take disjoint
  cliques $A$ of order $4p-1$, $B_1,B_2$ of
  order $2p-1$, $C_1,\ldots,C_{p-4}$ of order $p-1$,
  $D_1,\ldots,D_p$ of order $3$,
  $X_1,X_2,X_3$ of order $p-1$
  and $2p$ isolated vertices.
  Join all of $X_1$ to $3p$ vertices of $A$
  and all of $C_1$.
  Join all of $X_2$ to $p$ vertices of $B_1$ and
  all of $C_2$.
  Finally, join all of $X_3$ to $p$ vertices
  of $B_2$ and all of $C_3$.

  As in the previous theorem, all connected
  induced regular subgraphs are cliques.
  (For example, there is no connected
  regular induced subgraph consisting of non-empty parts
  of $A$, $X_1$ and $C_1$.)
  Counting disjoint non-adjacent cliques that could
  form an induced regular subgraph of order $4p$,
  we find that there are at most $4p-1$ of order~1,
  at most $2p-1$ of order~2, at most $p-1$ of order~4,
  at most 3 of order~$p$, at most one of order $2p$
  and none of order~$4p$.
  This completes the proof.
\end{proof}

\section{Computational investigation}\label{s:comp}

In this section we will describe how our computations
were performed, starting with our generation of
$\Req 5(n)$ and $\Rge 5(n)$ for all $n$, and
$\Req k(n)$ and $\Rge k(n)$ for $k\in\{6,7\}$
and $n\le 13$.

The method of isomorph-free generation is the
canonical construction path algorithm of the
second author~\cite{McK}.
Starting with $K_1$, one vertex at a time is
added in a way that guarantees no duplicates.
We will describe it for $\Req k$, but the same
approach applies to $\Rge k$.
We will assume that graphs in $\Req k(n)$ have
vertices $\{1,\ldots,n\}$ and that vertices are
added in numerical order (so in particular $n$
was the last vertex added).

Given a graph $G\in\Req k(n)$,
and a subset $U\subseteq V(G)$,
let $G{:}U$ denote the graph formed from $G$ by
appending a new vertex $n{+}1$ and joining it to~$U$.
We wish to find a list $\mathcal{U}_G$ of all the
subsets $U\subseteq V(G)$ such that $G{:}U\in\Req k(n{+}1)$.
These subsets are characterised by a set $\mathcal{L}_G$
of pairs of subsets $(U_1,U_2)$ of $\{1,\ldots,n\}$,
where $\abs{U_1}=k-1$ and $U_2\subseteq U_1$,
such that,
if vertex $n{+}1$ is joined to all of $U_2$ but none of
$U_1\setminus U_2$,
then $U_1\cup\{n{+}1\}$ induces a regular subgraph
of order $k$ in $G{:}U$.
The cases are:\\
(1) $U_1$ induces an independent set
of $G$ and $U_2=\emptyset$,\\
(2) $U_1$ induces a
clique of $G$ and $U_2=U_1$,\\
(3) the subgraph
induced by $U_1$ has two degrees $d,d+1$,
$U_2$ is the set of those with degree~$d$,
and $\abs{U_2}=d+1$.\\
Then we have
\[
   \mathcal{U}_G = \{ U\subseteq V(G) \st
      U\cap U_1 \ne U_2 \text{~for all~}
      (U_1,U_2)\in \mathcal{L}_G \}.
\]
We could make $\mathcal{U}_G$ by generating all
of $\mathcal{L}_G$ and then testing all
subsets of $\{1,\ldots,n\}$, but for larger $n$ there
is a more efficient way.
The key observation is that, if $G'=G{:}U$ is in
$\Req k(n{+}1)$, then both $G$ and the subgraph 
of~$G'$ induced by $\{1,\ldots,n-1,n+1\}$ are
in~$\Req k(n)$.
This implies that
\[
  \mathcal{U}_G \subseteq \mathcal{U}_{G-n}
  \cup \{ U\cup \{n\} \st U \in \mathcal{U}_{G-n}\},
\]
which is useful because $\mathcal{U}_{G-n}$ is
already known.
Moreover, we don't need all of $\mathcal{L}_G$
but only those pairs that are not in $\mathcal{L}_{G-n}$,
which means those pairs $(U_1,U_2)\in\mathcal{L}_G$
such that $n\in U_1$.
In essence, we only need to check for induced regular
subgraphs that include both $n$ and~$n{+}1$.

We now briefly describe how the canonical construction
path method works.
We require a tool that can compute the automorphism
group and canonical form of a graph, and for this
we used the second author's package~\texttt{nauty}~\cite{MP}.

For each $G\in\Req k(n)$, we find $\mathcal{U}_G$ as
above and compute its orbits under the action of
$\Aut(G)$.
Then for one representative $U$ of each orbit, we
construct $G{:}U$.
This member of $\Req k(n{+}1)$ is then rejected if
vertex $n{+}1$ is not in the same orbit of
$\Aut(G{:}U)$ as the vertex labelled last in a
canonical labelling.
The theory then implies that exactly
one member of each isomorphism class of $\Req k(n{+}1)$
is accepted~\cite{McK}.

Further speed-ups can be added to this process. For
example, we can assume that the last vertex added to
each graph is a vertex of maximum degree.
This reduces the number of elements of $\mathcal{U}_G$
that must be considered.
Validity requires that the last vertex in a canonical
form has maximum degree, but that can be enforced by
computing the canonical form with the vertices of
maximum degree separated from the remainder.
If $n{+}1$ is the output size, graphs in
$\Req k(n{+}1)$ can be accepted without canonisation
if there is only one vertex of maximum degree; for
smaller sizes the canonical form is computed anyway
since the automorphism group is needed for further
extension.

Another opportunity for optimisation is the observation that
$\Req k(n{+}1)$ is closed under graph complement.
This means that
members of $\Req k(n{+}1)$ with more than $\frac12\binom{n+1}2$
edges can be made from their complements rather than by extending
a graph in $\Req k(n)$.
In particular, graphs in $\Req k(n)$ with
more than $\frac12\binom{n+1}2$ edges don't need to be extended
at all. One of the authors used this optimisation to save
time, while the other avoided it for checking purposes: since
a graph and its complement generally have completely different
construction paths, it is a good check that the output is closed
under complement.

In cases where a complete enumeration was infeasible, we
found lower bounds by generating many graphs of the
largest size we could find, and verified that they 
could not be extended further.
Two techniques proved useful.  In the first technique,
given a graph with $n$ vertices, we took all its
subgraphs with $n{-}1$ vertices and extended them
back to $n$ vertices in all possible ways.
The same was done, but less exhaustively due to the
cost, with smaller subgraphs.
The second technique was to add or remove single edges,
move an edge from $uv$ to $uw$, and
perform switchings of
this form: take edges $uv,xy$ such that $ux,vy$
are not edges, then remove $uv,xy$ and add $ux,vy$.
Usually this results in a graph that has an induced
regular subgraph we don't want, but a small fraction
of cases produce a good graph we can add to the collection.

\subsection{Results}\label{s:results}

We now describe the results of our computations,
as summarised in Theorem~\ref{t:comp}.
The exact counts are given in Table~\ref{Reqtab} and
Table~\ref{Rgetab}.
All exact results were replicated by the two authors
using independent programs, except for the partial
replication of $\Req 5$ mentioned below.
Samples of the largest known graphs in each class
are available on the internet~\cite{DM}.

\begin{itemize}
 \item [$\Req 5$] : In this case we computed
   $\Req 5(n)$ for all~$n$, finding a total of
   42,256,311,802,387 graphs, with the largest being 20,038
   graphs on 20 vertices. This proves
   $\Neq 5=21$. As a check of program correctness,
   the same counts up to 15 vertices were also
   found by an independent program.
   Of the 20,038 extremal graphs, 26 are self-complementary.
 \item [$\Rge 5$] : The full set of graphs was
   announced by the second author in 1997 but
   not formally published. Using two independent
   programs, we confirmed the count of 159,379,295 graphs
   in total, with 954 graphs of order 16 being 
   the largest. Thus $\Nge 5=17$.
   An example of an extremal graph is the
   lexicographic product $P_4[P_4]$, where $P_4$
   is the path on 4 vertices.
   Of the 954 extremal graphs, 24 are self-complementary.
 \item [$\Req 6$] : For this case we found 16 graphs
   with 27 vertices and from 173 to 178 edges,
   but we did not prove that there are 
   none larger. 
   Thus $\Neq 6\ge 28$.
   All of the 27-vertex graphs we found are supergraphs
   of the lexicographic product $C_5[C_5]$;
   for example append a new vertex 
   adjacent to all 25 vertices, then append an
   isolated vertex.
 \item [$\Rge 6$] : This is the next case in 
   increasing order of difficulty beyond those we
   solved completely, but the total number
   of graphs, estimated to be about $6\times 10^{15}$,
   exceeds the computing resources we
   can commit.
   We found 49,871,540 graphs with 20 vertices,
   ranging from 86 to 104 edges.
   None of them are self-complementary,
   and none extend to 21 vertices.
   This proves $\Nge 6\ge 21$.
 \item [$\Req 7$] : In an early edition of this article
   we noted 214,646 graphs in $\Req 7(47)$,
   none of them extending to 48 vertices.
   This turned out to be very suboptimal as we can now
   present a graph in $\Req 7(70)$.
   Start with the unique rank 3 strongly regular graph
   with parameters $(64,27,10,12)$, known as
   $\mathit{VO}^-_6(2)$, 
   and let $S$ be its complement~\cite[p. 296]{BV}.
   Let $v,w$ be two non-adjacent vertices in~$S$;
   since $\mathit{VO}^-_6(2)$ is edge-transitive, it doesn't matter 
   which two.
   Now append six vertices $a,b,c,d,e,f$, and edges
   $va, ab, bc, wd, de, ef$.
   In addition, $c$ is adjacent to the neighbours of
   $v$ in $S$, and $f$ is adjacent to the
   neighbours of $w$ in $S$.
   The resulting graph and its complement are
   in $\Req7(70)$, therefore $\Neq7\ge 71$.
   
 \item [$\Rge 7$] : We found 174,775,920 graphs on 29
   vertices, none of them extending to 30 vertices.
   They have from 172 to 234 edges,
   and none of them are self-complementary.
   This proves $\Nge 7\ge 30$.\\
   To judge how close we are to a complete collection,
   we used the following random process. 
   Start with a heterogeneous random graph $G$
   on 28 vertices using
   the model defined in the proof of Theorem~\ref{t:general}.
   (1) If $G$ belongs to $\Rge7(28)$, extend it to 29 vertices
   in every way possible (which might be none) and stop.
   (2) Otherwise,
   remove a random vertex of $G$
   that lies on at least one regular
   induced subgraph of order 7 or more.
   (3) Then, try to extend the resulting 27-vertex graph
   back to a new 28-vertex graph $G$ in one random
   way such that the new vertex lies on no regular
   induced subgraph of order 7 or more. 
   If this is not possible, stop.
   (4) Repeat from step~(1).
   Usually this iteration stops at step~(3) but occasionally
   a graph in $\Rge7(29)$ is produced at step~(1).
   Of more than 6,000 graphs in $\Rge7(29)$ generated
   by this method,
   none of them belonged to the previous collection of
   more than 174 million graphs.
   This leads us to strongly suspect that our collection
   is far from complete and that the existence of a
   graph in $\Rge7(29)$ that extends to $\Rge7(30)$
   remains a good possibility.
\end{itemize}

Finally, we mention a common generalisation of $\Neq k$ and $\Nge k$.
If $S$ is a set of positive integers, let
$N_S$ denote the least positive $n$ such that every
graph with $n$ vertices has a regular induced
subgraph whose order is in~$S$.
We will not explore this concept in depth,
but we will report that $N_{\{4,5\}}=\Nge 4=7$ and
$N_{\{5,6\}}=\Nge5 = 17$.
The number of extremal graphs is 10 and 9,075, respectively.
The 27-vertex graph in $\Req 6$ that we described
above has no regular induced subgraph of order~7 either,
so $N_{\{6,7\}}\ge 28$.
Finally, we have 605,961,088 graphs on 30 vertices with no induced
regular subgraphs of order 7 or 8, showing that
$N_{\{7,8\}}\ge 31$.

\nicebreak
\section{Integer sequences}

\begin{table}[ht]
\centering
\begin{tabular}{c|@{\hspace{0.6em}}cccc}
  $n$ & $\abs{\Req 4(n)}$ & $\abs{\Req 5(n)}$
  & $\abs{\Req 6(n)}$ & $\abs{\Req 7(n)}$ \\
  \hline
  1 & 1 & 1 & 1 & 1 \\
  2 & 2 & 2 & 2 & 2 \\
  3 & 4 & 4 & 4 & 4 \\
  4 & 7 & 11 & 11 & 11 \\
  5 & 12 & 31 & 34 & 34 \\
  6 & 12 & 136 & 148 & 156 \\
  7 & 2 & 792 & 964 & 1038 \\
  8 & & 7185 & 10472 & 12246 \\
  9 & & 94893 & 191776 & 269646 \\
  10 & & 1714430 & 5524670 & 11453460 \\
  11 & & 37216434 & 219302174 & 907948002 \\
  12 & & 854671213 & 10333796899 & 127924347122 \\
  13 & & 18369802688 & 493296884096 & 30302185606487 \\
  14 & & 328662169364 & 19658348081642 & ..\\
  15 & & 4236467418682 & .. \\
  16 & & 29440587191035 & \\
  17 & & 8014569475958 & \\
  18 & & 216388700196 & \\
  19 & & 373319294 & \\
  20 & & 20038 &
\end{tabular}
\caption{Number of graphs in $\Req k(n)$}
\label{Reqtab}
\end{table}

\begin{table}[ht]
\centering
\begin{tabular}{c|@{\hspace{0.6em}}cccc}
  $n$ & $\abs{\Rge 4(n)}$ & $\abs{\Rge 5(n)}$
  & $\abs{\Rge 6(n)}$ & $\abs{\Rge 7(n)}$ \\
  \hline
  1 & 1 & 1 & 1 & 1 \\
  2 & 2 & 2 & 2 & 2 \\
  3 & 4 & 4 & 4 & 4 \\
  4 & 7 & 11 & 11 & 11 \\
  5 & 11 & 31 & 34 & 34 \\
  6 & 10 & 130 & 148 & 156 \\
  7 & & 728 & 960 & 1038 \\
  8 & & 6027 & 10390 & 12226 \\
  9 & & 66308 & 188560 & 268920 \\
  10 & & 818276 & 5317230 & 11361262 \\
  11 & & 8336902 & 202396620 & 885194426 \\
  12 & & 45933753 & 8905369148 & 119298229792 \\
  13 & & 79888458 & 384098286140 & 25716285392622 \\
  14 & & 23814804 & 13129756210164 & .. \\
  15 & & 512906 & .. \\
  16 & & 954
\end{tabular}
\caption{Number of graphs in $\Rge k(n)$}
\label{Rgetab}
\end{table}

\begin{itemize}\itemsep=0pt
\item \seqnum{A394564} Least integer $a(n)$ such that every graph on $a(n)$ vertices has an induced regular
subgraph of order $n$. 
\item \seqnum{A394574} Greatest $a(n)$ such
that every graph on $n$ vertices has an induced
regular subgraph of order $a(n)$.
\item \seqnum{A394563} Least integer $a(n)$ such that every graph on $a(n)$ vertices has an induced regular subgraph of order
at least $n$.
\item \seqnum{A390257} Minimum size of maximum regular induced subgraph of a graph on $n$ vertices.
\item \seqnum{A394573} Number of graphs with $n$ vertices that have no induced regular subgraph of order 4.
\item \seqnum{A394400} Number of graphs with $n$ vertices that have no induced regular subgraph of order 4 or greater.
\item \seqnum{A394539} Number of graphs with $n$ vertices that have no induced regular subgraph of order 5.
\item \seqnum{A390919} Number of graphs with $n$ vertices that have no induced regular subgraph of order 5 or greater.
\item \seqnum{A394462} Number of graphs with $n$ vertices that have no induced regular subgraph of order 6.
\item \seqnum{A392636} Number of graphs with $n$ vertices that have no induced regular subgraph of order 6 or greater.
\item \seqnum{A394930} Number of graphs with $n$ vertices that have no induced regular subgraph of order 7.
\item \seqnum{A394933} Number of graphs with $n$ vertices that have no induced regular subgraph of order 7 or greater.
\end{itemize}

\nicebreak
\section{Acknowledgments}
In preparing this article, the authors made use of
large language models (LLMs),
particularly ChatGPT, Claude and Gemini.
We found these to be very useful in suggesting methods,
proposing constructions, and checking proofs.
However, they also made frequent mistakes.
Nothing in the final version uses LLM wording directly,
and everything has been carefully checked by its
human authors.

The first author thanks those who donated computers used for his calculations: 
Kay Dyson,
Peter Dyson,
Jane Hope,
Joanne Knight,
Matthew Kwan,
Robin Langer,
Wendy Langer,
Brendan McKay,
Andrew Moylan,
Ard Oerlemans,
Rachel Wong and
Guoxing Zhao.

The second author used computing resources of the
Australian National Computational Infrastructure
and the ARDC Nectar Research Cloud.

\nicebreak

\end{document}